\def\titlerunning#1{\gdef\titrun{#1}}
\def\author#1{\gdef\autrun{\def\and{\unskip, }#1}\gdef\@author{#1}}
\def\address#1{{\def\and{\\\hspace*{18pt}}\renewcommand{\thefootnote}{}%
\footnote {#1}}%
\markboth{\autrun}{\titrun}}
\def\email#1{\hspace*{4pt}{\em e-mail}: #1}
\def\MSC#1{{\renewcommand{\thefootnote}{}%
\footnote{\emph{Mathematics Subject Classification (2010):} #1}}}
\def\keywords#1{\par\medskip
\noindent\textbf{Keywords:} #1}
\newtheorem{theorem}{Theorem}[section]
\newtheorem{prop}[theorem]{Proposition}
\newtheorem{cor}[theorem]{Corollary}
\newtheorem{lemma}[theorem]{Lemma}
\newtheorem{defin}[theorem]{Definition}
\theoremstyle{definition}
\newtheorem{remark}[theorem]{Remark}
\numberwithin{equation}{section}
\def\cL{\mathcal L}
\def\cE{\mathcal E}
\def\cF{\mathcal F}
\def\cH{\mathcal H}
\def\cM{\mathcal M}
\def\cK{\mathcal K}
\def\cO{\mathcal O}
\def\cP{\mathcal P}
\def\cX{\mathcal X}
\def\cT{\mathcal T}
\def\cR{\mathcal R}
\def\cS{\mathcal S}
\def\cW{\mathcal W}
\def\cQ{\mathcal Q}
\def\PG{{\rm PG}}
\def\GF{{\rm GF}}
\def\PGL{{\rm PGL}}
\def\PSL{{\rm PSL}}
\def\PGO{{\rm PGO}}
\def\PGU{{\rm PGU}}
\def\PSU{{\rm PSU}}
\def\PSp{{\rm PSp}}
\def\F{\mathbb F}
\begin{document}


\baselineskip=16pt

\titlerunning{}

\title{On line covers of finite projective and polar spaces}

\author{Antonio Cossidente
\and
Francesco Pavese}

\date{}

\maketitle

\address{A. Cossidente: Dipartimento di Matematica, Informatica ed Economia, Universit{\`a} degli Studi della Basilicata, Contrada Macchia Romana, 85100, Potenza, Italy;
\email{antonio.cossidente@unibas.it}
\and
F. Pavese: Dipartimento di Meccanica, Matematica e Management, Politecnico di Bari, Via Orabona 4, 70125 Bari, Italy; 
\email{francesco.pavese@poliba.it}}

\bigskip

\MSC{Primary 51E12; Secondary 51E20 51A50}


\begin{abstract}
An {\em $m$--cover} of lines of a finite projective space $\PG(r,q)$  (of a finite polar space $\cP$) is a set of lines $\cL$ of $\PG(r,q)$ (of $\cP$) such that every point of $\PG(r,q)$ (of $\cP$)  contains $m$ lines of $\cL$, for some $m$. Embed $\PG(r,q)$ in $\PG(r,q^2)$. Let ${\bar \cL}$ denote the set of points of $\PG(r,q^2)$ lying on the extended lines of $\cL$. 

An $m$--cover $\cL$ of $\PG(r,q)$ is an $(r-2)$--dual $m$--cover if there are two possibilities for the number of lines of $\cL$ contained in an $(r-2)$--space of $\PG(r,q)$. Basing on this notion,  we characterize $m$--covers $\cL$ of $\PG(r,q)$ such that ${\bar \cL}$ is a two--character set of $\PG(r,q^2)$. In particular, we show that if $\cL$ is invariant under a Singer cyclic group of $\PG(r,q)$ then it is an $(r-2)$--dual $m$--cover.

Assuming that the lines of $\cL$ are lines of a symplectic polar space $\cW(r,q)$ (of an orthogonal polar space $\cQ(r,q)$ of parabolic type), similarly to the projective case we introduce the notion of an $(r-2)$--dual $m$--cover of symplectic type (of parabolic type).   We prove that an $m$--cover $\cL$ of $\cW(r,q)$ (of $\cQ(r,q)$) has this dual property if and only if $\bar\cL$ is a tight set of an Hermitian variety $\cH(r,q^2)$ or of $\cW(r,q^2)$ (of $\cH(r,q^2)$ or of $\cQ(r,q^2)$). We also provide some interesting examples of $(4n-3)$--dual $m$--covers of symplectic type of $\cW(4n-1,q)$.

\keywords{finite projective space, finite polar space, $m$--cover, two--character set, tight set.}
\end{abstract}

\section{Introduction}

In this paper we deal with finite projective and polar spaces with special emphasis on finite Hermitian polar spaces.  Polar spaces arise from a finite vector space equipped with a non--degenerate reflexive sesquilinear form. A maximal totally isotropic/totally singular subspace ({\em generator}) of a polar space is a subspace of the underlying vector space of maximal Witt index.

We will use the term $n$--space to denote an $n$--dimensional projective subspace of the ambient projective space. Also in the sequel we will use the following notation $\theta_{n,q} := \genfrac{[}{]}{0pt}{}{n+1}{1}_q=q^n + \ldots + q + 1$ and $\theta_{-1,q} = 0$.

The notion of tight set of a finite generalized quadrangle has been introduced by Payne \cite{SP} and later on was generalized to finite polar spaces in \cite{KD} and in \cite{BKLP}. If $\cT$ is a set of points of a finite polar space, then $\cT$ is said to be {\em tight} if the average number of points of $\cT$ collinear with a given point attains a maximum possible value. The size of a tight set $\cT$ of a finite polar space $\cal P$ in $\PG(r,q)$ is $i\theta_{n,q}$, where a generator of $\cal P$ is an $n$--space, and $\cT$ is said to be $i$--tight. Let $\cT_1$ and $\cT_2$ be $i$--tight and $j$--tight sets of points of $\cP$, respectively. If $\cT_1\subseteq \cT_2$, then $\cT_2\setminus\cT_1$ is a $(j-i)$--tight set. If $\cT_1$ and $\cT_2$ are disjoint, then $\cT_1\cup\cT_2$ is an $(i+j)$--tight set. An $i$--tight set is {\em reducible} if it contains a smaller $i'$--tight set for some integer $i'<i$. A tight set is {\em irreducible} if it is not reducible.

A subset of points $\cO$ of a finite polar space $\cP$ is an $m$--{\em ovoid} if each generator of $\cP$ meets $\cO$ in $m$ points. 

A {\em two--character set} $\cX$ of $\PG(n,q)$  is a subset of points with the property that the intersection number with any hyperplane only takes two values.

An {\em $m$--cover} of lines of a finite projective space $\PG(r,q)$  (of a finite polar space $\cP$) is a set of lines $\cL$ of $\PG(r,q)$ (of $\cP$) such that every point of $\PG(r,q)$ (of $\cP$)  contains $m$ lines of $\cL$, for some $m$. Embed $\PG(r,q)$ in $\PG(r,q^2)$. Let ${\bar \cL}$ denote the set of points of $\PG(r,q^2)$ lying on the extended lines of $\cL$. 

In Section \ref{projective} we introduce the following notion. An $m$--cover $\cL$ of $\PG(r,q)$ is an $(r-2)$--dual $m$--cover if there are two possibilities for the number of lines of $\cL$ contained in an $(r-2)$--space of $\PG(r,q)$. Basing on this notion,  we characterize $m$--covers $\cL$ of $\PG(r,q)$ such that ${\bar \cL}$ is a two--character set of $\PG(r,q^2)$. In particular, we show that if $\cL$ is invariant under a Singer cyclic group of $\PG(r,q)$, then it is an $(r-2)$--dual $m$--cover.

In Section \ref{symplectic} and Section \ref{parabolic} we assume that the lines of $\cL$ are lines of a symplectic polar space $\cW(r,q)$ (of an orthogonal polar space $\cQ(r,q)$ of parabolic type), and similarly to the projective case we introduce the notion of an $(r-2)$--dual $m$--cover of symplectic type (of parabolic type). We prove that an $m$--cover $\cL$ of $\cW(r,q)$ (of $\cQ(r,q)$) has this dual property if and only if $\bar\cL$ is a tight set of an Hermitian variety $\cH(r,q^2)$ or of $\cW(r,q^2)$ (of $\cH(r,q^2)$ or of $\cQ(r,q^2)$). We also provide some interesting examples of $(4n-3)$--dual $m$--covers of symplectic type of $\cW(4n-1,q)$.
At the end of the paper examples of transitive $(q+1)$--tight sets of $\cH(n,q^2)$, $(n,q)\in \{(4,2),(4,3),(6,2)\}$ that are neither parabolic quadrics or symplectic spaces nor the disjoint union of generators, are provided.

\section{$(r-2)$--dual covers of $\PG(r,q)$}\label{projective}

An {\em $m$--cover} of lines of a projective space $\PG(r,q)$ is a set of lines $\cL$ of $\PG(r,q)$ such that every point of $\PG(r,q)$ contains $m$ lines of $\cL$, where $0 < m < \theta_{r-1,q}$. We have the following results.

\begin{lemma}\label{lemma1}
Let $\cL$ be an $m$--cover of $\PG(r,q)$, then
\begin{itemize} 
\item[$i)$] $\cL$ contains $m \theta_{r,q}/(q+1)$ lines, 
\item[$ii)$] every hyperplane of $\PG(r,q)$ contains $m \theta_{r-2,q}/(q+1)$ lines of $\cL$, 
\item[$iii)$] let $\Sigma$ be an $(r-2)$--space of $\PG(r,q)$, if $x$ is the number of lines of $\cL$ contained in $\Sigma$ and $y$ is the number of lines of $\cL$ meeting $\Sigma$ in one point, then $x(q+1)+y = m \theta_{r-2,q}$.  
\end{itemize}
\end{lemma}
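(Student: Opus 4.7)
The plan is to prove all three statements by elementary double counting of incidences between points and lines of $\cL$, using the defining property that each point of $\PG(r,q)$ lies on exactly $m$ lines of $\cL$.

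For part $(i)$, I would count the flag set $\{(P,\ell) : P\in\ell,\ \ell\in\cL\}$ in two ways. Summing over points gives $m\theta_{r,q}$, while summing over lines gives $(q+1)|\cL|$, since each line has $q+1$ points. Solving yields $|\cL|=m\theta_{r,q}/(q+1)$.

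For part $(ii)$, I fix a hyperplane $H$ and let $\alpha$ be the number of lines of $\cL$ lying in $H$ and $\beta$ the number meeting $H$ in exactly one point; since every line not in $H$ hits $H$ in exactly one point, $\alpha+\beta=|\cL|$. Counting the flags $\{(P,\ell): P\in H\cap\ell,\ \ell\in\cL\}$ gives $m\theta_{r-1,q}$ from the point side and $\alpha(q+1)+\beta$ from the line side. Subtracting the line-count relation of $(i)$ from this, and using the identity $\theta_{r-1,q}(q+1)=\theta_{r,q}+q\theta_{r-2,q}$ (easy to verify directly from the definition of $\theta_{n,q}$), one obtains $\alpha q=mq\theta_{r-2,q}/(q+1)$, hence $\alpha=m\theta_{r-2,q}/(q+1)$.

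For part $(iii)$, the same incidence-counting idea applies to an $(r-2)$-space $\Sigma$, but now lines of $\cL$ can interact with $\Sigma$ in three ways: contained in $\Sigma$, meeting $\Sigma$ in exactly one point, or disjoint from $\Sigma$. Counting flags $\{(P,\ell):P\in\Sigma\cap\ell,\ \ell\in\cL\}$ one gets $m\theta_{r-2,q}$ from the point side (using $|\Sigma|=\theta_{r-2,q}$) and $x(q+1)+y$ from the line side, so $x(q+1)+y=m\theta_{r-2,q}$ as claimed. There is no genuine obstacle here; the only mild subtlety is noticing that the identity $\theta_{r-1,q}(q+1)=\theta_{r,q}+q\theta_{r-2,q}$ is exactly what reduces the hyperplane computation in $(ii)$ to a clean closed form, and this identity is an immediate consequence of $\theta_{n,q}=q\theta_{n-1,q}+1=\theta_{n-1,q}+q^n$.
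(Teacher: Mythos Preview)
Your proof is correct and, for parts $(i)$ and $(ii)$, follows exactly the paper's argument (double counting point--line flags, first globally and then inside a fixed hyperplane). The only minor difference is in part $(iii)$: you double count flags $(P,\ell)$ with $P\in\Sigma$ directly, whereas the paper instead invokes part $(ii)$ on the $q+1$ hyperplanes through $\Sigma$ to obtain $y=(m\theta_{r-2,q}/(q+1)-x)(q+1)$; both arguments are equally elementary and yield the same identity.
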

\begin{proof}
{\em i)} A standard double counting argument on pairs $(P, \ell)$, where $P$ is a point of $\PG(r,q)$ and $\ell$ is a line of $\cL$ such that $P \in \ell$, gives $|\cL| = m \theta_{r,q}/(q+1)$. Hence if $r$ is even, then $q+1$ divides $m$.

{\em ii)} Let denote with $H_i$, $1 \le i \le \theta_{r,q}$, the hyperplanes of $\PG(r,q)$ and let $x_i$ be the number of lines of $\cL$ contained in $H_i$. Hence the number of lines of $\cL$ meeting $H_i$ in one point equals $ m \theta_{r,q}/(q+1) - x_i$. Double counting the pairs $(P, \ell)$, where $P \in H_i$, $\ell \in \cL$ and $P \in \ell$, we obtain 
$$
m \theta_{r-1,q} = x_i (q+1) + \frac{m \theta_{r,q}}{q+1} - x_i.
$$
It follows that
$$
x_i =  \frac{m \theta_{r-2,q}}{q+1} , 1 \le i \le \theta_{r,q} .
$$

{\em iii)} Let $\Sigma$ be an $(r-2)$--space of $\PG(r,q)$. Let $x$ be the number of lines of $\cL$ contained in $\Sigma$ and let $y$ be the number of lines of $\cL$ meeting $\Sigma$ in one point. A line meeting $\Sigma$ in a point lies in some hyperplane containing $\Sigma$. Since there are $q+1$ hyperplanes of $\PG(r,q)$ through $\Sigma$, it follows that 
$$
y = \left(\frac{m \theta_{r-2,q}}{q+1} - x\right) (q+1).
$$
\end{proof}

An {\em $n$--spread} $\cF$ of $\PG(r,\F)$ is a set of $n$--spaces of $\PG(r,\F)$ such that every point of $\PG(r,\F)$ is contained in exactly one element of $\cF$. In \cite{BF} the authors investigate {\em dual $(r-1)/2$--spreads} $\cal F$ of $\PG(r,\F)$, $r$ odd, namely spreads with the property that any hyperplane of $\PG(r,\F)$ contains exactly one member of $\cal F$. In particular they show that, if $\F$ is finite, then $S$ is always a dual spread (Theorem 1). Generalizing \cite{BF}, we say that an $m$--cover $\cL$ 
of $\PG(r,q)$ is {\em dual} if every hyperplane of $\PG(r,q)$ contains a constant number of lines of $\cL$. From Lemma \ref{lemma1} $ii)$, it follows that any $m$--cover is dual. We introduce the following definition.

\begin{defin}
An $m$--cover of lines $\cL$ of $\PG(r,q)$ is an {\em $(r-2)$--dual $m$--cover} of $\PG(r,q)$, if every $(r-2)$--space of $\PG(r,q)$ contains either $\frac{m \theta_{r-4,q}}{q+1}+q^{r-3}$ lines of $\cL$ or $\frac{m \theta_{r-4,q}}{q+1}$ lines of $\cL$.
\end{defin}

Let $\cL$ be an $m$--cover of $\PG(r,q)$. Embed $\PG(r,q)$ in $\PG(r,q^2)$. For every line $\ell$ of $\PG(r,q)$ we denote by ${\bar \ell}$ the extended line of $\PG(r,q^2)$, i.e., the unique line of $\PG(r,q^2)$ meeting $\ell$ in $q+1$ points. Let ${\bar \cL}$ be the set of points of $\PG(r,q^2)$ lying on the extended lines of $\cL$. Note that $\PG(r,q)$ is contained in ${\bar \cL}$. The following result gives a characterization of the $m$--covers of lines $\cL$ of $\PG(r,q)$ having the property that ${\bar \cL}$ is a two--character set of $\PG(r,q^2)$.

\begin{theorem}\label{main}
Let $\cL$ be an $m$--cover of lines of $\PG(r,q)$ and let ${\bar \cL}$ be the set of points of $\PG(r,q^2)$ lying on the extended lines of $\cL$. Then ${\bar \cL}$ is a two--character set of $\PG(r,q^2)$ if and only if $\cL$ is an $(r-2)$--dual $m$--cover of $\PG(r,q)$. 
\end{theorem}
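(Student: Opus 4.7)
The plan is to classify hyperplanes $\bar H$ of $\PG(r,q^2)$ according to their intersection with $\PG(r,q)$: either $\bar H$ is $\sigma$-invariant (where $\sigma$ is the Frobenius of $\F_{q^2}/\F_q$), so that $\bar H$ is the extension of an $\F_q$-hyperplane $H$, or $\bar H\cap\PG(r,q)$ is an $(r-2)$-space $\Sigma$, in which case exactly $q^2-q$ hyperplanes of $\PG(r,q^2)$ contain each given $\Sigma$. The key geometric input is that every point $P\in\PG(r,q^2)\setminus\PG(r,q)$ lies on a unique extended $\F_q$-line, namely $\langle P,P^\sigma\rangle$, so the ``new'' points coming from distinct lines of $\cL$ are pairwise disjoint. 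In the first case, Lemma~\ref{lemma1}(ii) at once delivers the constant value
$$V_1 := \theta_{r-1,q} + \frac{m\,\theta_{r-2,q}(q^2-q)}{q+1}.$$

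In the second case, letting $x$ denote the number of lines of $\cL$ contained in $\Sigma$ and splitting the lines of $\cL$ into those lying in $\Sigma$, those meeting $\Sigma$ in a single point, and those disjoint from $\Sigma$, Lemma~\ref{lemma1}(iii) yields
$$V_2(x) := \theta_{r-2,q}(1-m) + |\cL| + x\,q^2,$$
which is linear, hence injective, in $x$. A direct algebraic verification then shows that $V_2(x)=V_1$ precisely when $x=x_1:=\frac{m\,\theta_{r-4,q}}{q+1}+q^{r-3}$, while $V_2(x_2)=V_1-q^{r-1}$ for $x_2:=\frac{m\,\theta_{r-4,q}}{q+1}$. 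From this the ``if'' direction is immediate: if $\cL$ is $(r-2)$-dual then $x\in\{x_1,x_2\}$, forcing $|\bar H\cap\bar\cL|\in\{V_1,\,V_1-q^{r-1}\}$ for every hyperplane $\bar H$.

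For the converse, suppose $\bar\cL$ is two-character. Since $\sigma$-invariant hyperplanes always contribute the value $V_1$ and $V_2$ is injective in $x$, the quantity $x(\Sigma)$ takes at most two values, one of which must equal $x_1$. To force the other value to equal $x_2$, I would compute two global identities that depend only on $m,r,q$: the first moment $\sum_\Sigma x(\Sigma)=|\cL|\,\genfrac{[}{]}{0pt}{}{r-1}{2}_q$, obtained by double counting pairs $(\ell,\Sigma)$ with $\ell\in\cL$ and $\ell\subset\Sigma$; and the second moment $\sum_\Sigma x(\Sigma)(x(\Sigma)-1)$, which by double counting triples $(\ell,\ell',\Sigma)$ reduces to the number of ordered pairs of concurrent lines of $\cL$, and this equals $m(m-1)\theta_{r,q}$ since every point of $\PG(r,q)$ lies on exactly $m$ lines of $\cL$. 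These two identities, together with $x\in\{x_1,b\}$, uniquely determine $b$; since the $(r-2)$-dual case already realises $b=x_2$, this common value must be $b=x_2$.

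The principal obstacle is precisely this last step. The Case~1 and Case~2 intersection counts and the key identity $V_2(x_1)=V_1$ are routine algebraic bookkeeping in the $\theta$-notation, but the two-character hypothesis alone does not immediately pin down the second value of $x$; the match $b=x_2$ must be extracted from the second-moment count, which requires separating pairs of lines of $\cL$ sharing a point from skew pairs and relies on the $m$-cover property to control both totals simultaneously.
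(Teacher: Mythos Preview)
Your forward direction matches the paper's exactly: classify hyperplanes of $\PG(r,q^2)$ by whether they meet $\PG(r,q)$ in a hyperplane or an $(r-2)$-space, and compute $|\bar H\cap\bar\cL|$ in each case using Lemma~\ref{lemma1}. The identity $V_2(x_1)=V_1$ is also exactly what the paper uses.

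For the converse, however, your route diverges from the paper's, and your proposal contains a genuine error. The paper does \emph{not} compute moments of $x(\Sigma)$. Instead, knowing $k=|\bar\cL|$ and one character $\alpha=V_1$, it invokes the standard quadratic relation for two--character sets,
\[
k^2\theta_{r-2,q^2}+k(1-\alpha-\beta)\theta_{r-1,q^2}-k\theta_{r-2,q^2}+\alpha\beta\,\theta_{r,q^2}=0,
\]
which determines $\beta$ outright; then comparing $\alpha$ and $\beta$ with the formula $V_2(x)$ forces $x\in\{x_1,x_2\}$. This is short and uses only a known fact about two--character sets.

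Your moment approach can be made to work, but not as you stated it. The claim that $\sum_\Sigma x(\Sigma)(x(\Sigma)-1)$ ``reduces to the number of ordered pairs of concurrent lines of $\cL$, and this equals $m(m-1)\theta_{r,q}$'' is false: double counting triples $(\ell,\ell',\Sigma)$ gives $\sum_{(\ell,\ell')}|\{\Sigma:\ell,\ell'\subset\Sigma\}|$, and a pair of \emph{skew} lines of $\cL$ still lies in $\genfrac{[}{]}{0pt}{}{r-3}{2}_q>0$ codimension--two subspaces. You correctly flag this in your final paragraph, but it contradicts what you wrote two paragraphs earlier. The salvage is that the number of concurrent ordered pairs is $m(m-1)\theta_{r,q}$ and the number of skew ordered pairs is $|\cL|(|\cL|-1)-m(m-1)\theta_{r,q}$, both determined by $m,r,q$; hence the second moment \emph{is} computable, and then the identity $\sum_\Sigma (x(\Sigma)-x_1)(x(\Sigma)-b)=0$ (each summand vanishes) yields $b=(x_1M_1-M_2)/(Nx_1-M_1)$. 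You would still need to carry out this algebra and check $Nx_1\ne M_1$. The paper's use of the two--character relation sidesteps all of this.
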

\begin{proof}
Assume first that $\cL$ is an $(r-2)$--dual $m$--cover of $\PG(r,q)$. Let $\Pi$ be a hyperplane of $\PG(r,q^2)$. From \cite{S}, we have that $\Pi$ meets $\PG(r,q)$ either in a hyperplane or in an $(r-2)$--space. If the former case occurs, from Lemma \ref{lemma1}, $i)$, we have  
$$
|\Pi \cap {\bar \cL}| = (q^2-q) \frac{m \theta_{r-2,q}}{q+1} + \theta_{r-1,q} .
$$  
If the latter case occurs, then let $\Sigma = \Pi \cap \PG(r,q)$. Two possibilities for $\Sigma$ occur. Either $\Sigma$ contains $\frac{m \theta_{r-4,q}}{q+1}+q^{r-3}$ lines of $\cL$ or $\Sigma$ contains $\frac{m \theta_{r-4,q}}{q+1}$ lines of $\cL$. From Lemma \ref{lemma1}, $iii)$, it follows that 
$|\Pi \cap {\bar \cL}|$ equals either
$$
(q^2-q) \frac{m \theta_{r-2,q}}{q+1} + \theta_{r-1,q} 
$$ 
or 
$$
(q^2-q) \frac{m \theta_{r-2,q}}{q+1} + \theta_{r-2,q} .
$$

Viceversa, suppose that every hyperplane of $\PG(r,q^2)$ contains either $\alpha$ or $\beta$ points of ${\bar \cL}$. Set $k = |{\bar \cL}|$. It follows that
\begin{equation}\label{eq2}
k = (q^2-q) |\cL| + \theta_{r,q} = (q^2-q) \frac{m \theta_{r,q}}{q+1} + \theta_{r,q} .
\end{equation}
Note that, if $\Pi$ is a hyperplane of $\PG(r,q^2)$ meeting $\PG(r,q)$ in a $\PG(r-1,q)$, then 
\begin{equation} \label{eq3}
\alpha = |\Pi \cap {\bar \cL}| = (q^2-q) \frac{m \theta_{r-2,q}}{q+1} + \theta_{r-1,q} . 
\end{equation}
From \cite[p. 30--31]{SRG}, we have that 
\begin{equation} \label{eq4}
k^2 \theta_{r-2,q^2} + k (1-\alpha-\beta) \theta_{r-1,q^2} - k \theta_{r-2,q^2} + \alpha \beta \theta_{r,q^2} = 0 . 
\end{equation} 
Taking into account \eqref{eq2}, \eqref{eq3} and substituting $k$ and $\alpha$ in \eqref{eq4} we obtain
\begin{equation} \label{eq5}
\beta = (q^2-q) \frac{m \theta_{r-2,q}}{q+1} + \theta_{r-2,q} .
\end{equation}
On the other hand, if $\Pi$ is a hyperplane of $\PG(r,q^2)$ meeting $\PG(r,q)$ in a $\PG(r-2,q)$, say $\Sigma$, and $x$ is the number of lines of $\cL$ contained in $\Sigma$ and $y$ is the number of lines of $\cL$ meeting $\Sigma$ in one point, then 
\begin{equation} \label{eq6}
|\Pi \cap {\bar \cL}| = \frac{m \theta_{r,q}}{q+1} - x - y - (q^2-q) x + \theta_{r-2,q} = \frac{m \theta_{r,q}}{q+1} - (m-1) \theta_{r-2,q} + x q^2 . 
\end{equation}
Comparing \eqref{eq6} with \eqref{eq3}, we obtain 
$$
x = \frac{m \theta_{r-4,q}}{q+1} + q^{r-3} ,
$$
whereas, comparing \eqref{eq6} with \eqref{eq5}, we get
$$
x = \frac{m \theta_{r-4,q}}{q+1} .
$$
\end{proof}

If $r = 3$, then every $m$--cover of $\PG(3,q)$ is $(r-2)$--dual. However this is no longer true if $r > 3$, see Remark \eqref{ex}. 

\subsection{$m$--covers of lines of Singer type}

In this section we exhibit some examples of $(r-2)$--dual $m$--covers of $\PG(r,q)$. A {\em Singer cyclic group} of $\PG(r,q)$ is a cyclic group $\cal K$ of $\PGL(r+1,q)$ acting regularly on points of $\PG(r,q)$ \cite{Huppert}. In \cite[Lemma 2.1, Lemma 2.2]{Drudge} the author studies the action of $\cal K$ on subspaces of $\PG(r,q)$ and counts the number of orbits. In particular, when $\cal K$ acts on lines of $\PG(r,q)$ and $r$ is odd, apart from one orbit which is a line--spread (or a $1$--cover), any other orbit of $\cal K$ is a $(q+1)$--cover of $\PG(r,q)$. When $r$ is even all $\cal K$--orbits on lines are $(q+1)$--covers. We use the term {\em Singer cover} to denote a $\cK$--orbit of lines of $\PG(r,q)$.

	\begin{lemma}
Any Singer cover of $\PG(r,q)$ is an $(r-2)$--dual cover of $\PG(r,q)$.	
	\end{lemma}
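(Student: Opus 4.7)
The plan is to compute $f(\Sigma) := |\{\ell \in \cL : \ell \subset \Sigma\}|$ explicitly by realising the Singer action as multiplication on $\F_{q^{r+1}}$, and then to show that $f$ attains only two values. Identify $\PG(r,q)$ with the set of $1$-dimensional $\F_q$-subspaces of $\F_{q^{r+1}}$, so that the Singer group $\cK$ is induced by multiplication by a primitive element $\alpha \in \F_{q^{r+1}}^*$. Every $2$-dimensional $\F_q$-subspace of $\F_{q^{r+1}}$ can, after multiplication by a suitable nonzero scalar, be written as $\F_q + \F_q\gamma$ for some $\gamma \in \F_{q^{r+1}}\setminus\F_q$; hence, up to Singer translation, one may take the representative of the orbit $\cL$ to be $\ell_0 = \F_q + \F_q\gamma$, so $\cL = \{\alpha^i \ell_0\}_i$. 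The $(r-2)$-space $\Sigma$ corresponds to an $(r-1)$-dimensional $\F_q$-subspace $W$ of $\F_{q^{r+1}}$.

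The key observation is that $\alpha^i\ell_0 \subset W$ is equivalent to $\alpha^i, \alpha^i\gamma \in W$, i.e., $\alpha^i \in W \cap \gamma^{-1}W$. Counting such $i \in \mathbb{Z}/(q^{r+1}-1)$ and dividing by $s := |\mathrm{Stab}_{\F_{q^{r+1}}^*}(\ell_0)|$ (the multiplicity with which each line of $\cL$ is listed), one obtains
\begin{equation*}
f(\Sigma) \;=\; \frac{q^{d_W}-1}{s}, \qquad d_W := \dim_{\F_q}(W \cap \gamma^{-1}W),
\end{equation*}
where $s = q-1$ when $\cL$ is a generic $(q+1)$-cover orbit and $s = q^2-1$ when $\cL$ is the spread orbit (which requires $r$ odd and forces $\ell_0 = \F_{q^2}$, $\gamma \in \F_{q^2}\setminus\F_q$). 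The problem thus reduces to showing that the a priori three possible values of $d_W \in \{r-3, r-2, r-1\}$ collapse to two.

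The required exclusion depends on the case. In the generic case ($\gamma \notin \F_{q^2}$), the value $d_W = r-1$ would force $\gamma W \subset W$, making $W$ an $\F_q[\gamma]$-submodule; then $[\F_q[\gamma]:\F_q]$ would divide both $r+1$ and $r-1 = \dim_{\F_q}W$, hence divide $\gcd(r-1,r+1) = 2$, contradicting $\gamma \notin \F_q \cup \F_{q^2}$. Therefore $d_W \in \{r-3, r-2\}$ and $f(\Sigma) \in \{\theta_{r-4,q}, \theta_{r-3,q}\}$, matching the two values in the definition with $m = q+1$. In the spread case, $W + \gamma W$ is automatically $\F_{q^2}$-invariant since $\gamma^2 \in \F_q+\F_q\gamma$ implies $\gamma^2 W \subset W + \gamma W$; so $\dim_{\F_q}(W+\gamma W)$ is even, and combined with $\dim_{\F_q}(W+\gamma W) \in \{r-1, r, r+1\}$ and $r$ odd, the odd value $r$ is excluded. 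Thus $d_W \in \{r-3, r-1\}$, and the resulting values $(q^{r-3}-1)/(q^2-1)$ and $(q^{r-1}-1)/(q^2-1)$ match the definition with $m = 1$.

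The main obstacle is exactly this exclusion of the intermediate value of $d_W$: the generic case hinges on the number-theoretic identity $\gcd(r-1,r+1) = 2$, while the spread case relies on a parity argument derived from the $\F_{q^2}$-invariance of $W + \gamma W$. Once this is done, both cases produce exactly the two values prescribed by the definition of $(r-2)$-dual $m$-cover.
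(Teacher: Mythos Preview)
Your proof is correct and takes a genuinely different route from the paper's. The paper argues indirectly via Theorem~\ref{main}: it embeds $\PG(r,q)$ in $\PG(r,q^2)$, observes that under the Singer action $\PG(r,q^2)$ is partitioned into Baer subgeometries (together with, when $r$ is odd, two disjoint $\frac{r-1}{2}$-spaces), and notes that $\bar\cL$ is a union of pieces of this partition, hence a two--character set; the characterisation theorem then yields the $(r-2)$--dual property. Your argument, by contrast, stays entirely inside $\PG(r,q)$, realises the Singer action as multiplication in $\F_{q^{r+1}}$, and reduces the count $f(\Sigma)$ to the single invariant $d_W = \dim_{\F_q}(W \cap \gamma^{-1}W)$; the exclusion of the middle value of $d_W$ is then a short algebraic argument in each case. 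Your approach is more elementary and self--contained (it does not rely on Theorem~\ref{main} or on the Mellinger/Hirschfeld partition results), at the price of a small case split; the paper's approach is cleaner once the characterisation theorem is already in hand and fits the narrative of the section.

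One minor correction: the identity $\gcd(r-1,r+1)=2$ holds only for $r$ odd; for $r$ even the gcd is $1$. This does not affect your conclusion, since in either case $[\F_q[\gamma]:\F_q]$ is forced to divide $2$, contradicting $\gamma \notin \F_q \cup \F_{q^2}$ (and indeed when $r$ is even one has $\F_{q^2}\cap\F_{q^{r+1}}=\F_q$, so no $\gamma\in\F_{q^{r+1}}\setminus\F_q$ can lie in $\F_{q^2}$ anyway). Simply replace ``$=2$'' by ``divides $2$''. It may also be worth remarking explicitly that $\dim(W\cap\gamma^{-1}W)=\dim(W\cap\gamma W)$ (since $W\cap\gamma^{-1}W=\gamma^{-1}(\gamma W\cap W)$), so that the parity argument on $W+\gamma W$ really does control $d_W$.
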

	\begin{proof}
Let $\cL$ be a Singer cover of $\PG(r,q)$. Embed $\PG(r,q)$ in $\PG(r,q^2)$ and let $\bar{\cL}$ be the set of points of $\PG(r,q^2)$ lying on the extended lines of $\cL$. By considering the action of $\cal K$ on points of $\PG(r,q^2)$, it follows that, when $r$ is odd, the projective space $\PG(r,q^2)$ is partitioned into two $\frac{r-1}{2}$--spaces of $\PG(r,q^2)$, $\Sigma_i$, $i=1,2$, and $(q-1) \theta_{\frac{r-1}{2},q^2}$ Baer subgeometries, see \cite[Theorem 4.1]{Mellinger}. Whereas, when $r$ is even, $\PG(r,q^2)$ is partitioned into $\theta_{r,q^2}/\theta_{r,q}$ Baer subgeometries, see \cite[Theorem 4.29]{H}. From \cite{S} a simple counting argument shows that a hyperplane of $\PG(r,q^2)$ (not containing a subspace $\Sigma_i$) meets exactly one Baer subgeometry of the above partition in $\theta_{r,q}$ points.
If $\cL$ is the unique $1$--cover of $\PG(r,q)$, $r$ odd, then $\bar{\cL}$ is the disjoint union of $\Sigma_1$, $\Sigma_2$ and $q-1$ Baer subgeometries, see \cite{C1}. If $\cL$ is a $(q+1)$--cover, then $\bar{\cL}$ is the union of $q^2-q+1$ Baer subgeometries. In both cases, $\bar{\cL}$ is a two--character set. The result follows from Theorem \ref{main}.	
	\end{proof}

	\section{$(r-2)$--dual covers of symplectic type}\label{symplectic}

Let ${\cal W}(r,q)$ be a symplectic polar space of $\PG(r,q)$, $r \ge 3$ odd, and let $\perp$ denote the associated symplectic polarity. In this section we consider the case when the lines of an $m$--cover $\cL$ of $\PG(r,q)$ are totally isotropic with respect to $\perp$, i.e., {\em $m$--covers of $\cW(r,q)$}. We introduce the following definition.

	\begin{defin}
An $m$--cover $\cL$ of ${\cal W}(r,q)$ is said to be {\em $(r-2)$--dual of symplectic type} if 
$$\vert\{r\in \cL:r\subseteq \ell^\perp\}\vert=
	\begin{cases}
\frac{m \theta_{r-4,q}}{q+1}+q^{r-3} & \mbox{ if } \ell\in{\cL}, \\
\frac{m \theta_{r-4,q}}{q+1} & \mbox{ if } \ell\not\in{\cL}. 
	\end{cases}
$$		
	\end{defin}
	
If $\cL$ is the set of all totally isotropic lines of ${\cal W}(r,q)$, from \cite[Section 2]{CK}, the set $\bar{\cL}$ is a non--degenerate Hermitian variety ${\cal H}(r,q^2)$ of $\PG(r,q^2)$ with associated unitary polarity, say $\perp_{h}$, see also \cite[Table 4.5.A]{KL}. In this geometric setting we also consider the embedding of ${\cal W}(r,q)$ in ${\cal W}(r,q^2)$ and, with a slight abuse of notation, we denote by $\perp$ the symplectic polarity associated to ${\cal W}(r,q^2)$. Let $\tau$ denote the semilinear involution of ${\cal W}(r,q^2)$ fixing ${\cal W}(r,q)$ pointwise. It can be seen that $\perp \perp_h = \perp_h \perp = \tau$, i.e., $\perp$ and $\perp_h$ are commuting polarities of the ambient projective space. Let $P$ be a point of ${\cal W}(r,q^2)$. Then the polar hyperplane of $P$ with respect to $\perp$ coincides with the polar hyperplane of $P'=\tau(P)$ with respect to $\perp_h$. In particular, if $P \in \cW(r,q)$, then $P^\perp = P^{\perp_h}$.

A subset of points $\cal X$ of ${\cal H}(r,q^2)$ (or of $\cW(r,q^2)$) is said to be {\em i-tight} if 
$$
\vert P^{\perp_h}\cap {\cal X} \vert=
	\begin{cases}
i \theta_{\frac{r-3}{2},q^2} + q^{r-1} & \mbox{ if  } P\in{\cal X}, \\
i \theta_{\frac{r-3}{2},q^2} & \mbox{ if  } P\not\in{\cal X}.
	\end{cases}
$$			
We have the following result.
	
\begin{theorem}\label{main1}
Let $\cL$ be an $m$--cover of ${\cal W}(r,q)$ and let ${\bar \cL}$ be the set of points of ${\cH}(r,q^2)$ lying on the extended lines of $\cL$. Then ${\bar \cL}$ is an $(m(q^2-q)+q+1)$--tight set of ${\cal H}(r,q^2)$ if and only if $\cL$ is an $(r-2)$--dual cover of symplectic type. 
\end{theorem}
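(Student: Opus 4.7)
The plan is to follow the same scheme as Theorem~\ref{main}, with ordinary hyperplanes of $\PG(r,q^2)$ replaced by polar hyperplanes with respect to $\perp_h$. The bridge is the identity $P^{\perp_h}=\tau(P)^{\perp}$, which follows from $\perp\perp_h=\tau$. Using this I first set up a dictionary. For $P\in\PG(r,q^2)\setminus\PG(r,q)$ the line $\ell_P:=\langle P,\tau(P)\rangle$ is defined over $\F_q$; since $P\in\cH(r,q^2)$ is equivalent to $P\perp\tau(P)$, this happens precisely when $\ell:=\ell_P\cap\PG(r,q)$ is a totally isotropic line of $\cW(r,q)$. Moreover, $\ell_P^{\perp}\subseteq P^{\perp_h}\cap\tau(P)^{\perp_h}$ and a dimension count forces $P^{\perp_h}\cap\PG(r,q)=\ell^{\perp}$. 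Finally, $P\in\bar\cL$ if and only if $\ell\in\cL$: any extended line $\bar\ell_0$ containing $P$ is $\tau$--fixed, so it must contain $\tau(P)$ and therefore coincide with $\ell_P$.

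For the forward direction, assume $\cL$ is $(r-2)$--dual of symplectic type and let $P\in\cH(r,q^2)$. If $P\in\cW(r,q)$, then $P^{\perp_h}=P^{\perp}$ meets $\PG(r,q)$ in a hyperplane, and by Theorem~\ref{main}'s calculation (equation~\eqref{eq3}) we get $|P^{\perp_h}\cap\bar\cL|=(q^2-q)\frac{m\theta_{r-2,q}}{q+1}+\theta_{r-1,q}$. Using $\theta_{r-2,q}=(q+1)\theta_{(r-3)/2,q^2}$ and $\theta_{r-1,q}=\theta_{r-2,q}+q^{r-1}$, this rewrites as $(m(q^2-q)+q+1)\theta_{(r-3)/2,q^2}+q^{r-1}$, matching the tight condition for $P\in\bar\cL$. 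If instead $P\in\cH(r,q^2)\setminus\PG(r,q)$, apply formula~\eqref{eq6} to $\Pi=P^{\perp_h}$ and $\Sigma=\ell^{\perp}$: the dual hypothesis gives $x=\frac{m\theta_{r-4,q}}{q+1}+q^{r-3}$ when $\ell\in\cL$ (equivalently $P\in\bar\cL$) and $x=\frac{m\theta_{r-4,q}}{q+1}$ otherwise. Substituting and using $\theta_{r-2,q}-\theta_{r-4,q}=q^{r-3}(q+1)$ reproduces the two tight values.

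For the converse, given any totally isotropic line $\ell$ of $\cW(r,q)$, pick one of the $q^2-q$ points $P\in\bar\ell\setminus\PG(r,q)$; then $P\in\cH(r,q^2)$, $\ell_P\cap\PG(r,q)=\ell$, and $P\in\bar\cL\Leftrightarrow\ell\in\cL$. Writing the $i$--tight value at $P$ via formula~\eqref{eq6} with $\Sigma=\ell^{\perp}$, the two possibilities $P\in\bar\cL$ and $P\notin\bar\cL$ force, after solving for $x$, exactly the two intersection numbers in the definition of $(r-2)$--dual of symplectic type.

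The main obstacle is not conceptual but bookkeeping: juggling $\perp$, $\perp_h$ and the Baer involution $\tau$, and pinning down the key identification $P^{\perp_h}\cap\PG(r,q)=\ell^{\perp}$ for $P$ off $\PG(r,q)$. Once these are in place the theorem reduces, via Theorem~\ref{main}'s counting formulas~\eqref{eq3} and~\eqref{eq6}, to routine algebra on the $\theta$--symbols.
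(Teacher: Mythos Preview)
Your proposal is correct and follows essentially the same approach as the paper's proof: both reduce the tight--set condition to the count $|P^{\perp_h}\cap\bar\cL|=x(q^2-q)+\theta_{r-2,q}+|\cL|-x-y$ for $P\in\cH(r,q^2)\setminus\cW(r,q)$, use Lemma~\ref{lemma1}\,$iii)$ to eliminate $y$, and then match the two tight values against the two values of $x$ in the dual definition. Your write-up is slightly more explicit in justifying the key identification $P^{\perp_h}\cap\PG(r,q)=\ell^{\perp}$ and the equivalence $P\in\bar\cL\Leftrightarrow\ell\in\cL$ (which the paper states without argument), and you recycle formulas~\eqref{eq3} and~\eqref{eq6} from Theorem~\ref{main} rather than recomputing them inline, but these are presentational differences only.
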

\begin{proof}
Assume that ${\bar\cL}$ is an $(m(q^2-q)+q+1)$--tight set of ${\cal H}(r,q^2)$. Note that if $P\in{\cH}(r,q^2) \setminus {\cal W}(r,q)$ then $P^{\perp_h}\cap{\cW(r,q)}=\ell_P^{\perp}$, where $\ell_P$ is the unique extended line of $\cL$ through $P$. Assume that $P\in{\bar\cL}\setminus{\cal W}(r,q)$. It follows that 
$$
\vert P^{\perp_h}\cap{\bar\cL}\vert=(m(q^2-q)+q+1) \theta_{\frac{r-3}{2},q^2} + q^{r-1} = x (q^2-q) + \theta_{r-2,q} + \vert\cL\vert-x-y, 
$$
where $x$ is the the number of extended lines of $\cL$ contained in $P^{\perp_h}$ (hence contained in $\ell_P^\perp$) and $y$ is the number of extended lines of $\cL$ meeting $P^{\perp_h}$ in a point of $\cW(r,q)$. Then, taking into account Lemma \ref{lemma1}, $iii)$, we have 
$$
xq^2=m\frac{(q^2+1)\theta_{r-2,q}-\theta_{r,q}}{q+1}+q^{r-1}=m q^2 \frac{\theta_{r-4,q}}{q+1}+q^{r-1},
$$
and hence $x= \frac{m \theta_{r-4,q}}{q+1}+q^{r-3}$.
If $P\in{\cH}(r,q^2)\setminus{\bar\cL}$, then 
$$
\vert P^{\perp_h}\cap{\bar\cL}\vert = (m(q^2-q)+q+1)\theta_{\frac{r-3}{2},q^2} = x(q^2-q)+\theta_{{r-2},q} + \vert\cL\vert-x-y,
$$ 
where again $x$ is the the number of extended lines of $\cL$ contained in $P^{\perp_h}$ (hence contained in $\ell_P^{\perp}$) and $y$ is the number of extended lines of $\cL$ meeting $P^{\perp_h}$ in a point of $\cW(r,q)$. In this case $x=\frac{m \theta_{r-4,q}}{q+1}$.

Viceversa, assume that $\cL$ is an $(r-2)$--dual cover of symplectic type. If $P\in{\cal W}(r,q)$, then 
$$
P^{\perp_h}\cap{\bar\cL}=m (q^2-q)\frac{\theta_{r-2,q}}{q+1}+\theta_{r-1,q}=(m(q^2-q)+q+1)\theta_{\frac{r-3}{2},q^2}+q^{r-1}.
$$
On the other hand, if $P\in{\cH}(r,q^2)\setminus{\cW}(r,q)$, then 
$$
P^{\perp_h}\cap{\bar\cL} = x(q^2-q) + \theta_{r-2,q} + \vert\cL\vert - x - y,
$$ 
where $x$ is the the number of lines of $\cL$ contained in $P^{\perp_h}$ (hence contained in $\ell_P^\perp$) and $y$ is the number of lines of $\cL$ meeting $P^\perp$ in a point of $\cW(r,q)$. Hence, ${\bar \cL}$ is an $(m(q^2-q)+q+1)$--tight set of ${\cal H}(r,q^2)$.
\end{proof}

	\begin{cor}
In $\PG(r,q)$, $r$ odd, if $\cL$ is an $(r-2)$--dual cover of symplectic type, then $\cL$ is an $(r-2)$--dual cover. 
	\end{cor}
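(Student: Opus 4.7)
The plan is to chain Theorem~\ref{main1} and Theorem~\ref{main} through the intermediate property that $\bar\cL$ is a two--character set of $\PG(r,q^2)$. By Theorem~\ref{main1} the hypothesis is equivalent to $\bar\cL$ being an $(m(q^2-q)+q+1)$--tight set of $\cH(r,q^2)$, and by Theorem~\ref{main} the conclusion is equivalent to $\bar\cL$ being a two--character set of $\PG(r,q^2)$. Thus the corollary reduces to showing that our tight set $\bar\cL$ is two--character in the ambient projective space.

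Since $\perp_h$ is a polarity of $\PG(r,q^2)$, every hyperplane has the form $P^{\perp_h}$ for a unique point $P$. When $P\in\cH(r,q^2)$ the tight--set condition directly yields
\[
|P^{\perp_h}\cap\bar\cL|\in\{i\theta_{(r-3)/2,q^2}+q^{r-1},\ i\theta_{(r-3)/2,q^2}\}
\]
with $i=m(q^2-q)+q+1$, and using the identity $(q+1)\theta_{(r-3)/2,q^2}=\theta_{r-2,q}$ these values match precisely the intersection numbers
\[
\alpha=(q^2-q)\frac{m\theta_{r-2,q}}{q+1}+\theta_{r-1,q},\qquad \beta=(q^2-q)\frac{m\theta_{r-2,q}}{q+1}+\theta_{r-2,q}
\]
appearing in Theorem~\ref{main}. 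Moreover, on every $\tau$--invariant hyperplane (those whose pole lies in $\PG(r,q)\subseteq\cH(r,q^2)$) Lemma~\ref{lemma1}$(ii)$ directly forces the intersection value $\alpha$.

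The main obstacle is ruling out a third intersection number on the secant hyperplanes $\Pi=P^{\perp_h}$ with $P\notin\cH(r,q^2)$: such $\Pi$ meets $\PG(r,q)$ in an $(r-2)$--space $\Sigma$ whose polar line $\Sigma^\perp$ is hyperbolic, and by \eqref{eq6} the intersection $|\Pi\cap\bar\cL|$ is an affine function of the number $x_\Sigma$ of lines of $\cL$ contained in $\Sigma$, whereas the symplectic--dual hypothesis controls $x_\Sigma$ only when $\Sigma^\perp$ is totally isotropic. I would close this gap by a double--counting argument: the incidence identities $\sum_\Pi|\Pi\cap\bar\cL|=k\theta_{r-1,q^2}$ and $\sum_\Pi|\Pi\cap\bar\cL|^2=k\theta_{r-1,q^2}+k(k-1)\theta_{r-2,q^2}$, together with $k=|\bar\cL|=(q^2-q)|\cL|+\theta_{r,q}$, the Segre identity \eqref{eq4}, and the already--determined values on tangent and $\tau$--invariant hyperplanes, leave no room for a third value, forcing each secant intersection to lie in $\{\alpha,\beta\}$. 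With $\bar\cL$ confirmed as a two--character set of $\PG(r,q^2)$, Theorem~\ref{main} closes the argument.
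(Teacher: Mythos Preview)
Your overall strategy coincides with the paper's: use Theorem~\ref{main1} to pass from the symplectic--dual hypothesis to $\bar\cL$ being tight in $\cH(r,q^2)$, then show $\bar\cL$ is two--character in $\PG(r,q^2)$, and finish with Theorem~\ref{main}. The paper dispatches the middle step in one line by citing \cite[Theorem~12]{BKLP} (tight sets of polar spaces are two--character), whereas you attempt to prove it from scratch. Your treatment of the tangent hyperplanes $P^{\perp_h}$, $P\in\cH(r,q^2)$, is fine and matches the paper's numerics.

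The gap is in your handling of the secant hyperplanes $P^{\perp_h}$, $P\notin\cH(r,q^2)$. The two moment identities you write down, combined with \eqref{eq4}, yield only
\[
\sum_{\Pi}\bigl(|\Pi\cap\bar\cL|-\alpha\bigr)\bigl(|\Pi\cap\bar\cL|-\beta\bigr)=0,
\]
and since the tangent terms already vanish individually, the secant terms sum to zero. But these terms need not be of constant sign, so the vanishing of their sum does \emph{not} force each $|\Pi\cap\bar\cL|$ into $\{\alpha,\beta\}$; your claim that the moments ``leave no room for a third value'' is unjustified. (Note also that \eqref{eq4} in the paper is \emph{derived} from the two--character hypothesis, so invoking it here is circular unless you verify it algebraically for these particular $k,\alpha,\beta$.)

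The clean way to close this---and essentially what underlies \cite[Theorem~12]{BKLP}---is not a moment count but the tight--set/ovoid intersection formula. For $P\notin\cH(r,q^2)$ the section $P^{\perp_h}\cap\cH(r,q^2)$ is a non--degenerate $\cH(r-1,q^2)$, and every generator of $\cH(r,q^2)$ meets it in a $\frac{r-3}{2}$--space; hence it is a $\theta_{(r-3)/2,q^2}$--ovoid of $\cH(r,q^2)$. Since $\bar\cL\subseteq\cH(r,q^2)$, \cite[Corollary~5]{BKLP} gives $|P^{\perp_h}\cap\bar\cL|=i\,\theta_{(r-3)/2,q^2}=\beta$ for every secant hyperplane. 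This is exactly the missing ingredient; with it your argument goes through and is then just an unpacking of the citation the paper makes.
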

	\begin{proof}
If $\cL$ is an $(r-2)$--dual cover of symplectic type, than $\bar\cL$ is a tight set of ${\cH}(r,q^2)$, which in turn is a two--character set of $\PG(r,q^2)$, see \cite[Theorem 12]{BKLP}. From Theorem \ref{main}, $\cL$ is an $(r-2)$--dual cover. 
	\end{proof}

	\begin{remark}
The tight sets constructed in Theorem \ref{main1} are all reducible since they always contain the $\theta_{r,q}$ points of $\cW(r,q)$ which constitute a $(q+1)$--tight set of ${\cH}(r,q^2)$.	
	\end{remark}
	
Analogously to Theorem \ref{main1} we have the following result.	
	\begin{theorem}\label{main2}
Let $\cL$ be an $m$--cover of ${\cal W}(r,q)$ and let ${\bar \cL}$ be the set of points of ${\cal W}(r,q^2)$ lying on the extended lines of $\cL$. Then ${\bar \cL}$ is an $(m(q^2-q)+q+1)$--tight set of ${\cal W}(r,q^2)$ if and only if $\cL$ is an $(r-2)$--dual cover of symplectic type.
	\end{theorem}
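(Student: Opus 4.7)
The argument mimics the proof of Theorem~\ref{main1}, replacing the unitary polarity $\perp_h$ by the symplectic polarity $\perp$ of $\cW(r,q^2)$. The crucial structural difference is that $\cW(r,q^2)$ coincides with $\PG(r,q^2)$ as a point set, so the tight-set identity has to be verified for \emph{every} $P\in\PG(r,q^2)$. I split into the cases $P\in\PG(r,q)$ and $P\in\PG(r,q^2)\setminus\PG(r,q)$. In the first case $P^\perp$ is the extension of the polar hyperplane $H=P^\perp\cap\PG(r,q)$, so by Lemma~\ref{lemma1}~(ii) the hyperplane $H$ contains exactly $m\theta_{r-2,q}/(q+1)$ lines of $\cL$, and a direct count gives
$$
|P^\perp\cap\bar\cL|=\theta_{r-1,q}+(q^2-q)\,\frac{m\theta_{r-2,q}}{q+1}=(m(q^2-q)+q+1)\theta_{\frac{r-3}{2},q^2}+q^{r-1},
$$
which is the tight target for $P\in\PG(r,q)\subseteq\bar\cL$.

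For $P\in\PG(r,q^2)\setminus\PG(r,q)$ I write $P=A+\omega B$ with $A,B\in\F_q^{r+1}$ linearly independent and $\omega\in\F_{q^2}\setminus\F_q$. By $\F_{q^2}$-bilinearity of $\perp$, a point $Q\in\PG(r,q)$ satisfies $\perp(P,Q)=0$ if and only if $\perp(A,Q)=\perp(B,Q)=0$, hence $P^\perp\cap\PG(r,q)=\ell^\perp$, where $\ell$ is the $\F_q$-line underlying $\ell_P=\langle P,\tau(P)\rangle$. Moreover $\bar\ell$ is the unique extended line through $P$, so $P\in\bar\cL$ iff $\ell\in\cL$. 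Letting $x$ and $y$ denote the number of lines of $\cL$ contained in, respectively meeting in one point, $\ell^\perp$, Lemma~\ref{lemma1}~(iii) yields $x(q+1)+y=m\theta_{r-2,q}$, and splitting $P^\perp\cap\bar\cL$ into the $\theta_{r-2,q}$ points of $\ell^\perp$, the $x(q^2-q)$ non-rational points from the $x$ extended lines inside $P^\perp$, and the $|\cL|-x-y$ one-point contributions from the remaining extended lines gives
$$
|P^\perp\cap\bar\cL|=\theta_{r-2,q}+xq^2+m\left[\frac{\theta_{r,q}}{q+1}-\theta_{r-2,q}\right].
$$

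Setting this equal to $(m(q^2-q)+q+1)\theta_{\frac{r-3}{2},q^2}+\delta$, with $\delta=q^{r-1}$ if $P\in\bar\cL$ and $\delta=0$ otherwise, and invoking the identity $(q^2+1)\theta_{r-2,q}-\theta_{r,q}=q^2\theta_{r-4,q}$ (already used in the proof of Theorem~\ref{main}), I obtain $x=m\theta_{r-4,q}/(q+1)+q^{r-3}$ when $\ell\in\cL$ and $x=m\theta_{r-4,q}/(q+1)$ when $\ell\notin\cL$. This is exactly the $(r-2)$-dual-of-symplectic-type condition, so both implications follow simultaneously. The only new ingredient beyond the Hermitian case is the identification $P^\perp\cap\PG(r,q)=\ell^\perp$ for non-rational $P$, which is immediate from bilinearity; non-totally-isotropic lines $\ell$ (which occur here precisely because $\cW(r,q^2)=\PG(r,q^2)$) are absorbed into the ``$\ell\notin\cL$'' branch of the definition, so no argument beyond the arithmetic of the Hermitian proof is required.
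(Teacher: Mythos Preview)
Your forward direction (tight in $\cW(r,q^2)$ $\Rightarrow$ $(r-2)$--dual of symplectic type) is correct and in fact more direct than the paper's, which instead routes through Theorem~\ref{main1} via the identity $P^\perp=\tau(P)^{\perp_h}$ and the $\tau$--invariance of $\bar\cL$.

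The reverse direction, however, has a genuine gap. The definition of ``$(r-2)$--dual of symplectic type'' constrains $|\{r\in\cL:r\subseteq\ell^\perp\}|$ only for \emph{totally isotropic} lines $\ell$ of $\cW(r,q)$; this reading is forced by Theorem~\ref{main1}, whose forward direction works entirely inside $\cH(r,q^2)$ and therefore only ever produces a totally isotropic $\ell_P$. Your assertion that non--totally--isotropic lines ``are absorbed into the $\ell\notin\cL$ branch of the definition'' thus assumes something that is not part of the hypothesis. Concretely: for $P\in\cW(r,q^2)\setminus\cH(r,q^2)$ one has $\perp(P,\tau(P))\ne 0$, so the underlying line $\ell$ of $\ell_P=\langle P,\tau(P)\rangle$ is \emph{not} totally isotropic, the $(r-2)$--dual--of--symplectic--type hypothesis says nothing about $x$ for this particular $\ell^\perp$, and your formula $|P^\perp\cap\bar\cL|=\theta_{r-2,q}+xq^2+m[\theta_{r,q}/(q+1)-\theta_{r-2,q}]$ cannot be evaluated.

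The paper closes this case by a different mechanism. From the hypothesis and Theorem~\ref{main1}, $\bar\cL$ is an $(m(q^2-q)+q+1)$--tight set of $\cH(r,q^2)$. For $P\notin\cH(r,q^2)$ the hyperplane $P^\perp=\tau(P)^{\perp_h}$ meets $\cH(r,q^2)$ in a non--degenerate $\cH(r-1,q^2)$, which is a $\theta_{(r-3)/2,q^2}$--ovoid of $\cH(r,q^2)$ \cite[Lemma~7]{BKLP}; the tight--set/$m$--ovoid intersection formula \cite[Corollary~5]{BKLP} then gives $|P^\perp\cap\bar\cL|=(m(q^2-q)+q+1)\theta_{(r-3)/2,q^2}$ directly, without ever computing $x$. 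To repair your argument you must either invoke this, or supply an independent proof that $x=m\theta_{r-4,q}/(q+1)$ holds for every non--isotropic line $\ell$ whenever $\cL$ is $(r-2)$--dual of symplectic type.
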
	
	\begin{proof}
Assume that ${\bar\cL}$ is an $(m(q^2-q)+q+1)$--tight set of ${\cal W}(r,q^2)$. By construction, ${\bar\cL}$ is contained in ${\cal H}(r,q^2)$. Also $P^\perp = \tau(P)^{\perp_h}$ and $P \in {\bar \cL}$ if and only if $\tau(P) \in {\bar\cL}$. Hence ${\bar \cL}$ is an $(m(q^2-q)+q+1)$--tight set of ${\cH}(r,q^2)$ and, from Theorem \ref{main1}, $\cL$ is an $(r-2)$--dual cover of symplectic type. 

Viceversa, assume that $\cL$ is an $(r-2)$--dual cover of symplectic type. If $P\in{\cal W}(r,q)$, then 
$$
\vert P^{\perp}\cap{\bar\cL}\vert=\vert P^{\perp_h}\cap{\bar\cL}\vert=m (q^2-q) \frac{\theta_{r-2,q}}{q+1}+\theta_{r-1,q}=(m(q^2-q)+q+1) \theta_{\frac{r-3}{2},q^2} + q^{r-1}. 
$$
If $P\in{\cH}(r,q^2)\setminus{\cal W}(r,q)$ then $P^{\perp}={\tau(P)}^{\perp_h}$, where $\langle P,\tau(P)\rangle$ is the unique extended totally isotropic line of ${\cal W}(r,q)$ through $P$. Hence $\tau(P)\in\bar\cL$ if and only if $P\in\bar\cL$. It follows that $\vert P^\perp\cap{\bar\cL}\vert$ is either 
$$
(m(q^2-q)+q+1) \theta_{\frac{r-3}{2},q^2} + q^{r-1}
$$ 
or 
$$
(m(q^2-q)+q+1) \theta_{\frac{r-3}{2},q^2},
$$
according as $P$ lies or does not lie on $\bar\cL$. Finally, if $P\in{\cal W}(r,q^2)\setminus {\cal H}(r,q^2)$ then $\tau(P)\not\in {\cal W}(r,q)$ and the line $\langle P,\tau(P)\rangle$ is not a line of ${\cal W}(r,q^2)$. Now $P^\perp=\tau(P)^{\perp_h}$ and $\tau(P)^{\perp_h}$ meets ${\cal H}(r,q^2)$ in a Hermitian variety ${\cal H}(r-1,q^2)$ that is a $\theta_{\frac{r-3}{2},q^2}$--ovoid of ${\cal H}(r,q^2)$ \cite[Lemma 7]{BKLP}. From \cite[Corollary 5]{BKLP}, we have that 
$$
\vert P^\perp\cap
\bar\cL\vert =\vert\tau(P)^{\perp_h}\cap {\bar\cL}\vert = (m(q^2-q)+q+1) \theta_{\frac{r-3}{2},q^2} .
$$
Hence ${\bar \cL}$ is a $(m(q^2-q)+q+1)$--tight set of ${\cal W}(r,q^2)$.	
	\end{proof}

	\begin{remark}
If $\cL$ consists of all the lines of $\cW(r,q)$, then $\cL$ is an $(r-2)$--dual $\theta_{r-2,q}$--cover of symplectic type. A more interesting example arises by considering the set of lines $\cL$ of the split Cayley hexagon $H(q)$ embedded in $\cW(5,q)$, $q$ even. Indeed, from \cite{CMP} the set $\bar\cL$ is a $(q^3+1)$--tight set of both ${\cal H}(5,q^2)$ and ${\cal W}(5,q^2)$. Hence $\cL$ is a $3$--dual $(q+1)$--cover of symplectic type of ${\cal W}(5,q)$, $q$ even. A similar result holds true in the odd characteristic case, see Proposition \ref{splitodd}. 
	\end{remark}

Note that Theorem \ref{main1} and Theorem \ref{main2} generalize Theorem 2.1 of \cite{P}. Indeed, if $r = 3$, then every $m$--cover of $\cW(3,q)$ is $(r-2)$--dual of symplectic type. However this is no longer true if $r > 3$, see Remark \eqref{ex}. 

	\begin{remark}\label{ex}
There exist $m$--covers of $\PG(r,q)$ (of $\cW(r,q)$) that are neither $(r-2)$--dual covers nor $(r-2)$--dual covers of symplectic type. Let $\cW(5,3)$ be a symplectic polar space of $\PG(5,3)$. The group $\PSp(6,3)$ fixing $\cW(5,3)$ has two conjugacy classes of subgroups isomorphic to $\PSL(2,13)$, see also \cite{B}. With the aid of {\em MAGMA} \cite{BCP} we checked that one of them has $8$ orbits on lines of ${\cal W}(5,3)$ of sizes $91,91,364,364,546,546,546,1092$.
All of them are $m$--covers of $\PG(5,3)({\cal W}(5,3))$, for some $m$. The orbits of size $91$ are line spreads of $\PG(5,3)$ (of $\cW(5,3)$) that are neither $3$--dual nor $3$--dual of symplectic type. One of the two orbits of size $364$ is a $3$--dual cover of symplectic type and hence $3$--dual. All the remaining orbits are neither $3$--dual covers nor $3$--dual covers of symplectic type. Notice that the union of two orbits of size $546$ and the orbit of size $1092$ is a $3$--dual cover of symplectic type.
	\end{remark}

	\subsection{$(4n-3)$--dual covers of symplectic type of $\PG(4n-1,q)$}
	
In this section we provide some instances of $(4n-3)$--dual covers of symplectic type of $\PG(4n-1,q)$, $n \ge 2$. The projective symplectic group $\PSp(4n,q)$ acts naturally on the projective space $\Sigma := \PG(4n-1,q)$ and fixes a symplectic polar space $\cW(4n-1,q)$ with associated symplectic polarity $\perp$. In \cite{RHD1} R.H. Dye constructed a spread $\cF$ in $\PG(4n-1,q)$ consisting of totally isotropic lines of the polarity $\perp$, whose stabilizer in $\PSp(4n,q)$ contains a group $G$ isomorphic to $\PSp(2n,q^2)$. From \cite[p. 178]{RHD1}, the group $G$ fixes a pencil of linear complexes, say $\cW_i$, $1 \le i \le q+1$, one of them being $\cW(4n-1,q)$.

In \cite[Theorem 1, p. 499]{RHD} the author also proved that, if $n\ge 2$, the group $G$ has three orbits on totally isotropic lines of $\PG(4n-1,q)$, namely the spread $\cF$, the set ${\cal O}_1$  consisting of the lines that are not conjugate to each member of $\cF$ that they intersect non--trivially, and the set ${\cal O}_2$ consisting of the lines that are conjugate to each member of $\cF$ that they intersect non--trivially. An equivalent description of $\cO_i$, $i = 1,2$, is the following: let $\ell$ be a line of $\cW(4n-1,q^2)$, with $\ell \notin \cF$ and let $r_1, \dots, r_{q+1}$ be the lines of $\cF$ meeting $\ell$ in one point. Then $\langle r_1, \dots, r_{q+1} \rangle$ is a solid $\cT$ and there are two possibilities: either $\cT \cap \cW(4n-1,q) = \cW(3,q)$ and $\ell \in \cO_1$ or $\cT \subset \cW(4n-1,q)$ and $\ell \in \cO_2$. 
The group $G$ acts transitively on points of $\PG(4n-1,q)$, see \cite[Theorem 5]{RHD1}. It follows that $\cF$, ${\cal O}_1$ and ${\cal O}_2$ are $m$--covers of $\cW(4n-1,q)$, for some $m$.

Embed $\Sigma$ in $\PG(4n-1,q^2)$ and let $\tau$ denote the semilinear involution of $\PG(4n-1,q^2)$ fixing $\Sigma$ pointwise. Each linear complex $\cW_i$ gives rise to a non--degenerate Hermitian variety ${\cal H}_i$ of $\PG(4n-1,q^2)$ and these Hermitian varieties form a pencil of $\PG(4n-1,q^2)$. Associated with the spread $\cF$ are two disjoint $(2n-1)$--dimensional subspaces of $\PG(4n-1,q^2)$, say $\Sigma_1$ and $\Sigma_2$, that are generators of each Hermitian variety ${\cal H}_i$ and that are disjoint from $\Sigma$. The lines of the spread $\cF$, when extended over $\GF(q^2)$, meet both $\Sigma_1$ and $\Sigma_2$. In particular, the lines of $\cF$ are exactly those lines of $\cW(4n-1,q)$ meeting both $\Sigma_1$ and $\Sigma_2$. Also, $\tau(\Sigma_1) = \Sigma_2$. The group $G$ stabilizes both $\Sigma_1, \Sigma_2$ and acts naturally on $\Sigma_1$ fixing a symplectic polar space $\cW(2n-1,q^2) \subset \Sigma_1$. If $g$ is an $(n-1)$--space of $\Sigma_1$ that is a generator of $\cW(2n-1,q^2)$, then $\langle g, \tau(g) \rangle$ meets $\Sigma$ in a $(2n-1)$--space $\gamma$. Note that $\cF$ induces a line--spread on $\gamma$, and hence $\gamma$ is totally isotropic with respect to each linear complex $\cW_i$, $1 \le i \le q+1$. Varying $g$, we get a set $\cM$ of size $(q^2+1)(q^4+1)\dots(q^{2n}+1)$, consisting of $(2n-1)$--spaces of $\PG(4n-1,q)$ that are totally isotropic with respect to each symplectic space $\cW_i$, $1 \le i \le q+1$. It follows that the lines in common to every symplectic space $\cW_1$, $1 \le i \le q+1$ are in the union $\cF\cup{\cal O}_2$, that are all the lines lying in some member of $\cM$. 

Using \cite[Section 2.2]{CP} it can be seen that, for a point $P\in\Sigma_1\cup\Sigma_2$, the polar hyperplane of the point $P$ is the same with respect to each Hermitian variety ${\cal H}_i$. We denote such a polar hyperplane $P^{\perp_h}$.

Let $P$ be a point of $\Sigma_1$ and let $P^{\perp_h}$ be the polar hyperplane of $P$ with respect to the polarity associated with each ${\cal H}_i$. The intersection of $P^{\perp_h}$ and $\Sigma_2$ is a $(2n-2)$--space of $\Sigma_2$, say $S_P$, and we say that $S_P$ {\em corresponds} to $P$. Note that the subspace $\langle P, S_P\rangle$ is a generator of $\cH_i$, $1 \le i \le q+1$.

Take another point $P'$ of $\Sigma_1$, $P'\ne P$, and suppose that its corresponding subspace $S_{P'}$ coincides with $S_P$. Then, the line joining $P$ with $P'$ and $S_P$ would be orthogonal to each other and hence would generate a $2n$--space contained in $\cH_i$, a contradiction.

Thus, allowing $P$ to vary over the points of $\Sigma_1$, the construction described above produces a family, say ${\cP}_1$, of
$\theta_{2n-1,q^2}$ distinct generators of $\cH_i$, $1 \le i \le q+1$. In a similar way, as $Q$ varies over the points of $\Sigma_2$, one obtains another collection, say ${\cP}_2$, of $\theta_{2n-1,q^2}$ distinct generators of $\cH_i$, $1 \le i \le q+1$. By considering the points lying in some element of $\cP_1$, we get a subset of points of $\cH_i$, $1 \le i \le q+1$, say $\cR$, with
$$
|\cR| = (q^{4n-2}+1) \theta_{2n-1,q^2} 
$$
and containing at least $2\theta_{2n-1,q^2}+2$ generators of $\cH_i$, $1 \le i \le q+1$, including $\Sigma_1$ and $\Sigma_2$.

\begin{theorem}\label{main3}
The set $\cR$ is a $(q^{4n-2}+1)$--tight set of ${\cal H}_i$.
\end{theorem}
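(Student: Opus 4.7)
The plan is to exploit the linearity of the tight-set defining condition in the characteristic function, together with the observation that every generator of $\cH_i$ is itself a $1$-tight set. If I can write $\chi_\cR$ as an integer combination of characteristic functions of generators, then $\cR$ automatically satisfies the tight-set equation with parameter equal to the sum of the coefficients. The crux is therefore to determine the covering multiplicity of each point of $\cH_i$ by the family $\cP_1=\{g_P : P\in\Sigma_1\}$, where $g_P:=\langle P,S_P\rangle$.

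I would first check that a point $Q\notin\Sigma_1\cup\Sigma_2$ belongs to a unique $g_P$, by noting that the projection from $\Sigma_2$ onto $\Sigma_1$ recovers $P$. A point $Q\in\Sigma_1$ lies in $g_P$ only when $P=Q$. A point $Q\in\Sigma_2$ lies in $g_P$ iff $Q\in S_P=P^{\perp_h}\cap\Sigma_2$, equivalently $P\in Q^{\perp_h}\cap\Sigma_1$; since $\Sigma_1$ is a generator of $\cH_i$ and $Q\notin\Sigma_1$, this intersection is a $(2n-2)$-space of size $\theta_{2n-2,q^2}$. Combining these three counts gives
\[
\sum_{P\in\Sigma_1}\chi_{g_P} \;=\; \chi_\cR + (\theta_{2n-2,q^2}-1)\,\chi_{\Sigma_2},
\]
and hence
\[
\chi_\cR \;=\; \sum_{P\in\Sigma_1}\chi_{g_P} - (\theta_{2n-2,q^2}-1)\,\chi_{\Sigma_2}.
\]

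Each $g_P$ and $\Sigma_2$ is a generator of $\cH_i$, so each satisfies $|R^{\perp_h}\cap g| = \theta_{2n-2,q^2} + q^{4n-2}\chi_g(R)$ for every $R\in\cH_i$, i.e.\ each is a $1$-tight set. Summing this identity with the coefficients of the decomposition, the $\chi_g(R)$ terms reassemble into $\chi_\cR(R)$ while the constant $\theta_{2n-2,q^2}$ is rescaled by $\theta_{2n-1,q^2}-(\theta_{2n-2,q^2}-1)=q^{4n-2}+1$, yielding
\[
|R^{\perp_h}\cap\cR| \;=\; (q^{4n-2}+1)\,\theta_{2n-2,q^2} + q^{4n-2}\,\chi_\cR(R)
\]
for every $R\in\cH_i$, which is exactly the condition for $\cR$ to be $(q^{4n-2}+1)$-tight. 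The only delicate step will be the multiplicity calculation on $\Sigma_2$, which rests on the polar duality $Q\in S_P \iff P\in Q^{\perp_h}$ and on the fact that $Q^{\perp_h}\cap\Sigma_1$ has codimension exactly one in $\Sigma_1$; everything else reduces to routine bookkeeping.
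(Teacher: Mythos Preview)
Your proof is correct and takes a genuinely different route from the paper's. The paper proceeds by a direct case analysis: for a point $P$ of $\cH_i$ it distinguishes whether $P \in \Sigma_1 \cup \Sigma_2$, $P \in \cR \setminus (\Sigma_1 \cup \Sigma_2)$, or $P \in \cH_i \setminus \cR$, and in each case counts how the tangent hyperplane $P^{\perp_h}$ meets the individual members of $\cP_1$, summing the contributions explicitly. Your argument instead exploits the fact that the tight--set condition is affine--linear in the characteristic function, so that any integer combination of characteristic functions of generators (each a $1$--tight set) is automatically tight with parameter equal to the sum of the coefficients. The crux for you is the multiplicity identity $\sum_{P\in\Sigma_1}\chi_{g_P} = \chi_{\cR} + (\theta_{2n-2,q^2}-1)\chi_{\Sigma_2}$, which you establish cleanly via the projection argument off $\Sigma_2$ and the polar duality $Q\in S_P \Leftrightarrow P\in Q^{\perp_h}$ on $\Sigma_2$. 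Your approach is shorter, more conceptual, and makes transparent \emph{why} the tightness parameter is $\theta_{2n-1,q^2} - (\theta_{2n-2,q^2}-1) = q^{4n-2}+1$; the paper's computation is more hands--on and does not appeal to the general linearity principle. One minor wording point: your sentence ``a point $Q\notin\Sigma_1\cup\Sigma_2$ belongs to a unique $g_P$'' should strictly read ``to at most one $g_P$'' (uniqueness is only asserted for those $Q$ that lie in some $g_P$, i.e.\ $Q\in\cR$), but the displayed identity you derive from it is unambiguous and correct.
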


\begin{proof}
Let $H$ be a tangent hyperplane to ${\cal H}_i$ at a point $P$. We distinguish several cases.

Assume first that $P\in\Sigma_1$ or $P\in\Sigma_2$. Then in this case either $\Sigma_1\subset H$ or $\Sigma_2 \subset H$. Let $\Sigma_1 \subset H$. A similar argument holds if $\Sigma_2\subset H$. In this case the intersection between $H$ and $\Sigma_2$ is the $(2n-2)$--space $S_P$. Of course, $H$ meets any other member of $\cP_1$ in a $(2n-2)$--space. It follows
that $H$ meets $\cR$ in $h_1$ points where
$$
h_1=\theta_{2n-1,q^2}+\theta_{2n-2,q^2}+(\theta_{2n-1,q^2}-\theta_{2n-2,q^2}-1)+(\theta_{2n-1,q^2}-1)(\theta_{2n-2,q^2}-\theta_{2n-3,q^2}-1)=
$$
$$
= \theta_{2n-1,q^2}+q^{4n-4}(\theta_{2n-1,q^2}-1)=
(q^{4n-2}+1)\theta_{2n-2,q^2}+q^{4n-2}.
$$

Assume that $P\not\in \Sigma_1\cup\Sigma_2$. Let $\ell$ be the unique line containing $P$ and meeting both, $\Sigma_1$ and $\Sigma_2$, in a point. Let $X_i=H \cap\Sigma_i$, $i=1,2$.

If $P\in{\cal R}$, then $\ell$ is a line of ${\cal H}_i$ and $\langle \ell, X_2 \rangle$ is a member of ${\cal P}_1$ contained in $H$. Also $H$ meets $(\theta_{2n-2,q^2}-1)$ members of $\cP_1$ in a $(2n-2)$--space meeting $X_1$ in a point and $X_2$ in a $(2n-3)$--space. The remaining members of $\cP_1$ have empty intersection with $X_1$ and meet $X_2$ and hence each of them shares $\theta_{2n-2,q^2}-\theta_{2n-3,q^2}$ points with $H$. It follows that $H$
meets $\cR$ in $h_1$ points where
$$
h_1=2\theta_{2n-2,q^2}+\theta_{2n-1,q^2}-\theta_{2n-2,q^2}-1+(\theta_{2n-2,q^2}-1)(\theta_{2n-2,q^2} - \theta_{2n-3,q^2}-1) + 
$$
$$
+ (\theta_{2n-1,q^2}-\theta_{2n-2,q^2})(\theta_{2n-2,q^2}-\theta_{2n-3,q^2}) = (q^{4n-2}+1)\theta_{2n-2,q^2}+q^{4n-2}.
$$

If $P\in{\cal H}_i\setminus {\cal R}$ then $\ell$ is a line that is secant to ${\cal H}_i$ and does not exist a member of ${\cal P}_1$ through $P$ contained in $H$. It follows that $H$ meets $\cR$ in $h_2$ points where
$$
h_2=2\theta_{2n-2,q^2}+\theta_{2n-2,q^2}(\theta_{2n-2,q^2}-\theta_{2n-3,q^2}-1)+
(\theta_{2n-1,q^2}-\theta_{2n-2,q^2}-1)(\theta_{2n-2,q^2}-\theta_{2n-3,q^2}) =
$$
$$
=(q^{4n-2}+1)\theta_{2n-2,q^2}.
$$
\end{proof}

Let $\bar{\cal F},\bar{\cO_1},\bar{\cO_2}$ be the sets of points of ${\cH}_i$ lying on the extended lines of ${\cF},{\cO_1},{\cO_2}$, respectively. 

	\begin{prop}
Each of the sets $\bar{\cF},\bar{\cO_1},\bar{\cO_2}$ is an $i$--tight set of $\cH_i$ with parameter $i = q^2+1, q^{4n-1}-q^{4n-2}+q+1, q^{4n-2}-q^2+q+1$, respectively.
	\end{prop}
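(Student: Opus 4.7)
The plan is to apply Theorem~\ref{main1} to convert each tight-set claim into the equivalent statement that the underlying line-cover is $(4n-3)$-dual of symplectic type. Since $G$ acts transitively on points of $\PG(4n-1,q)$, each of $\cF,\cO_1,\cO_2$ is an $m$-cover of $\cW(4n-1,q)$ for some $m$. The spread $\cF$ is a $1$-cover, and since $\cF\cup\cO_1\cup\cO_2$ exhausts the totally isotropic lines of $\cW(4n-1,q)$, the parameters satisfy $1+m_1+m_2=\theta_{4n-3,q}$; reading the identity $i=m(q^2-q)+q+1$ of Theorem~\ref{main1} backward from the claimed tight parameters forces $m_1=q^{4n-3}$ and $m_2=q\theta_{4n-5,q}$, which is consistent.

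I would first handle $\cF$ directly. By the Dye construction, every extended spread line $\bar\ell$ (with $\ell\in\cF$) meets $\Sigma_1$ in a unique point $P_\ell$, and $\ell\mapsto P_\ell$ is a bijection $\cF\leftrightarrow\Sigma_1$. Since $\ell^\perp$ is $\tau$-stable, a line $r\in\cF$ lies in $\ell^\perp$ iff $P_r\in\Sigma_1\cap\ell^\perp$, so $|\{r\in\cF:r\subset\ell^\perp\}|=|\Sigma_1\cap\ell^\perp|$. The key combinatorial step is then to show that $\Sigma_1\cap\ell^\perp$ has projective dimension $2n-2$ when $\ell\in\cF$ (since $P_\ell$ then lies in this intersection, pushing the generic dimension up by one) and $2n-3$ otherwise. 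This produces the two values $\theta_{2n-2,q^2}=\theta_{4n-5,q}/(q+1)+q^{4n-4}$ and $\theta_{2n-3,q^2}=\theta_{4n-5,q}/(q+1)$ required by the definition of a $(4n-3)$-dual cover of symplectic type, so $\bar\cF$ is $(q^2+1)$-tight.

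For $\cO_2$ I would exploit the structural description $\cF\cup\cO_2=\{r:r\subset\gamma\text{ for some }\gamma\in\cM\}$, giving $\bar\cF\cup\bar\cO_2=\bigcup_{\gamma\in\cM}\bar\gamma$ as a union of common generators of all $\cH_i$. A tangent-hyperplane count in the spirit of Theorem~\ref{main3} shows that this union is a $(q^{4n-2}+1)$-tight set of $\cH_i$. Since $\bar\cF\cap\bar\cO_2=\Sigma$ (every non-$\Sigma$ point lies on a unique extended totally isotropic line) and $\Sigma$ itself is a $(q+1)$-tight set, subtracting $\bar\cF$ from $\bar\cF\cup\bar\cO_2$ and re-adding $\Sigma$ yields $\bar\cO_2$ with parameter $(q^{4n-2}+1)-(q^2+1)+(q+1)=q^{4n-2}-q^2+q+1$. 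Finally, $\cH_i=\bar\cF\cup\bar\cO_1\cup\bar\cO_2$ is $(q^{4n-1}+1)$-tight by the Remark after Theorem~\ref{main2}, with all pairwise intersections equal to $\Sigma$, so the same inclusion--exclusion gives $\bar\cO_1$ the parameter $(q^{4n-1}+1)-(q^{4n-2}+1)+(q+1)=q^{4n-1}-q^{4n-2}+q+1$.

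The main obstacle is the two direct combinatorial verifications: computing the dimension of $\Sigma_1\cap\ell^\perp$ in the two cases for $\cF$, and carrying out the tangent-hyperplane count establishing $\bigcup_{\gamma\in\cM}\bar\gamma$ as tight. Both rely on a careful analysis of how the symplectic polarity $\perp$ interacts with the Dye configuration $(\Sigma_1,\Sigma_2,\cM)$.
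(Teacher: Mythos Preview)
Your treatment of $\bar\cO_2$ and $\bar\cO_1$ coincides with the paper's: it too uses Theorem~\ref{main3} to get that $\cR=\bar\cF\cup\bar\cO_2$ is $(q^{4n-2}+1)$--tight, then subtracts $\bar\cF$ and re-adds $\Sigma$, and finally takes the complement in $\cH_i$ for $\bar\cO_1$. You need not redo any tangent--hyperplane count; Theorem~\ref{main3} is already in hand at this point.

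Where you diverge is on $\bar\cF$. The paper bypasses Theorem~\ref{main1} entirely and instead quotes Mellinger's decomposition: $\bar\cF$ is the disjoint union of the two generators $\Sigma_1,\Sigma_2$ and $q-1$ Baer subgeometries. Since an embedded Baer subgeometry is $(q+1)$--tight in $\cH_i$ and each generator is $1$--tight, additivity gives $(q-1)(q+1)+2=q^2+1$ in one line. Your route through the dimension of $\Sigma_1\cap\overline{\ell^\perp}$ is also valid but longer. The ``obstacle'' you flag dissolves once one observes that (in vector dimensions) $\dim(U_1\cap L^\perp)=2n-2+\dim(U_1^\perp\cap L)$, that $U_1^\perp$ is one of $U_1,U_2$, and that $\bar\ell$ meets $\Sigma_1$ iff it meets $\Sigma_2$ (apply $\tau$) iff $\ell\in\cF$; this yields exactly the two projective dimensions $2n-2$ and $2n-3$ you need. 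So your plan is sound, just less economical than the paper's for the $\bar\cF$ step.
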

	\begin{proof}
The set $\bar{\cal F}$ is the disjoint union of $q-1$ Baer subgeometries and of the two generators $\Sigma_1$ and $\Sigma_2$, \cite[Theorem 4.1]{Mellinger}. Since a Baer subgeometry embedded in $\cH_i$ is a $(q+1)$--tight set of $\cH_i$, see \cite[Section 5.2]{BKLP}, and $\Sigma_i$, $i=1,2$, is a $1$--tight set of $\cH_i$, we have that $\bar{\cal F}$ is a $(q^2+1)$--tight set of $\cH_i$. Moreover ${\cR}=\bar{\cF}\cup\bar{\cO_2}$ and $\bar{\cF} \cap \bar{\cO_2}=\Sigma$, hence $\bar{\cO_2}$ is $(q^{4n-2}-q^2+q+1)$--tight set of $\cH_i$. Trivially the whole of ${\cal H}_i$ can be considered a $(q^{4n-1}+1)$--tight set of itself, which gives that $\bar{\cO_1}$ is a $(q^{4n-1}-q^{4n-2}+q+1)$--tight set of $\cH_i$.
	\end{proof}

From Theorem \ref{main1} and the previous proposition we have the following corollary.

	\begin{cor}
Each of the $m$--covers ${\cal F},{\cal O}_1,{\cal O}_2$ of $\cW(4n-1,q)$ is $(4n-3)$--dual of symplectic type.	
	\end{cor}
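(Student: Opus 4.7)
The plan is to invoke Theorem \ref{main1} in the ``tight set implies $(r-2)$-dual cover of symplectic type'' direction. The previous proposition already delivers that $\bar\cF$, $\bar{\cO_1}$, $\bar{\cO_2}$ are tight sets of $\cH_i=\cH(4n-1,q^2)$ with parameters $q^2+1$, $q^{4n-1}-q^{4n-2}+q+1$ and $q^{4n-2}-q^2+q+1$ respectively. Since $r=4n-1$, Theorem \ref{main1} says that $\bar\cL$ being $\big(m(q^2-q)+q+1\big)$-tight is equivalent to $\cL$ being a $(4n-3)$-dual $m$-cover of symplectic type. So all that remains is: (a) check that each of $\cF$, $\cO_1$, $\cO_2$ really is an $m$-cover of $\cW(4n-1,q)$ for some specific $m$, and (b) verify that the three tight parameters above are of the form $m(q^2-q)+q+1$ for the correct $m$.

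For (a), the point-transitivity of $G$ on $\PG(4n-1,q)$ (cited from Dye) implies that each of the three $G$-orbits on isotropic lines is an $m$-cover of $\cW(4n-1,q)$ for some constant $m$; indeed, the number of orbit members through a point is an invariant of the orbit. The value of $m$ for $\cF$ is obviously $1$ since $\cF$ is a line spread. For $\cO_1$ and $\cO_2$ I would not try to compute orbit sizes directly; instead I would use the fact that $\cF\cup\cO_1\cup\cO_2$ is the entire set of isotropic lines of $\cW(4n-1,q)$, which is a $\theta_{4n-3,q}$-cover (as every point lies on exactly $\theta_{4n-3,q}$ isotropic lines), giving the relation $1+m_1+m_2=\theta_{4n-3,q}$, where $m_i$ is the cover parameter of $\cO_i$.

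For (b), solve $m(q^2-q)+q+1$ against each of the three tight parameters. For $\cF$ with $m=1$ the formula yields $q^2+1$, matching the proposition. For $\bar{\cO_2}$, equating $m(q^2-q)+q+1=q^{4n-2}-q^2+q+1$ forces $m=q\theta_{4n-5,q}$, and for $\bar{\cO_1}$, equating against $q^{4n-1}-q^{4n-2}+q+1$ forces $m=q^{4n-3}$. These values are mutually consistent with the identity above, since
\[
1+q\theta_{4n-5,q}+q^{4n-3}=1+(q+q^2+\cdots+q^{4n-4})+q^{4n-3}=\theta_{4n-3,q}.
\]
This identification of $m$ both confirms that the orbit sizes are indeed of the predicted form and supplies the input to Theorem \ref{main1}, which then concludes that each of $\cF$, $\cO_1$, $\cO_2$ is a $(4n-3)$-dual cover of symplectic type.

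There is essentially no hidden obstacle: the previous proposition carries the geometric content, and this corollary is a matching-of-parameters verification. The only mild subtlety is making sure that one is entitled to speak of $m_1, m_2$ at all, which is why the transitivity of $G$ on points is crucial; after that the argument reduces to elementary arithmetic with $q$-binomial expressions.
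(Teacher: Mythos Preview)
Your proposal is correct and follows exactly the same approach as the paper: the paper's proof is the single line ``From Theorem \ref{main1} and the previous proposition we have the following corollary,'' and you have simply spelled out the parameter matching that this line leaves to the reader. Your explicit computation of the three $m$-values ($1$, $q^{4n-3}$, $q\theta_{4n-5,q}$) and the consistency check $1+m_1+m_2=\theta_{4n-3,q}$ are helpful elaborations, but they add no new idea beyond what the paper intends.
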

	
\section{$(r-2)$--dual covers of parabolic type}\label{parabolic}

Let ${\cal Q}(r,q)$ be a parabolic polar space of $\Sigma:=\PG(r,q)$, $q$ odd, $r \ge 4$ even, and let $\perp$ denote the associated orthogonal polarity. Let $G$ be the group of projectivities of $\Sigma$ isomorphic to $\PGO(r+1,q)$ stabilizing the parabolic polar space $\cQ(r,q)$. Embed $\Sigma$ in $\PG(r,q^2)$ and let $\tau$ denote the semilinear involution of $\PG(r,q^2)$ fixing $\Sigma$ pointwise. From \cite[Table 4.5.A]{KL} the group $G$, considered as a group of projectivities of $\PG(r,q^2)$, fixes a non--degenerate Hermitian variety ${\cal H}(r,q^2)$, with associated unitary polarity $\perp_{h}$, and a parabolic quadric $\cQ(r,q^2)$ of $\PG(r,q^2)$, where $\cQ(r,q) \subset \cQ(r,q^2) \cap \cH(r,q^2)$. With a slight abuse of notation, we denote by $\perp$ the orthogonal polarity associated to ${\cal Q}(r,q^2)$. Then $\perp \perp_h = \perp_h \perp = \tau$, i.e., $\perp$ and $\perp_h$ are commuting polarities of the ambient projective space. Let $P$ be a point of ${\cal Q}(r,q^2)$. Then the polar hyperplane of $P$ with respect to $\perp$ coincides with the polar hyperplane of $P'=\tau(P)$ with respect to $\perp_h$. In particular, if $P \in \Sigma$, then $P^\perp = P^{\perp_h}$. 

From \cite[Theorem 8]{BKLP}, the group $G$ has three orbits on points of $\cQ(r,q^2)$, namely $\cQ(r,q^2)$, $\cO$ and $\cO'$. The orbit $\cO$ has size $q(q^{r}-1)(q^{r-2}-1)/(q^2-1)$ and consists of the points on the extended lines of $\cQ(r,q)$ that are not in $\Sigma$, while $\cO'$ has size $q^{r-1}(q^{r}-1)/(q+1)$, and consists of the points lying on the extended lines of $\Sigma$ that are external to $\cQ(r,q)$. 

A subset of points $\cal X$ of ${\cH}(r,q^2)$ (or of $\cQ(r,q^2)$) is said to be {\em i-tight} if 
$$
\vert P^{\perp_h}\cap {\cal X} \vert=
	\begin{cases}
i \theta_{\frac{r-4}{2},q^2} + q^{r-2} & \mbox{ if  } P\in {\cal X}, \\
i \theta_{\frac{r-4}{2},q^2} & \mbox{ if  } P \not\in {\cal X}.
	\end{cases}	
$$

From \cite[Theorem 8]{BKLP} each of the three $G$--orbits $\cQ(r,q)$, $\cO$, $\cO'$ is an $i$--tight set of $\cQ(r,q^2)$ with parameter $i = q+1$, $q^{r-1}-q$, $q^r-q^{r-1}$, respectively. An analogous result will be proved in the first part of this section.

\begin{prop}\label{prop1}
The group $G$ has four orbits on points of $\cH(r,q^2)$:
\begin{itemize}
\item[1)] $\cQ(r,q)$,
\item[2)] $\cO$ consisting of points (not in $\Sigma$) on the extended lines of $\cQ(r,q)$,
\item[3)] $\cE$ consisting of points of $\cH(r,q^2)$ lying on the extended lines of $\Sigma$ that are external to $\cQ(r,q^2)$,
\item[4)] $\cS$ consisting of points of $\cH(r,q^2) \setminus \cQ(r,q)$ lying on the extended lines of $\Sigma$ that are secant to $\cQ(r,q^2)$.
\end{itemize}
In particular $|\cS| = |\cE| = q^{r-1}(q^{r}-1)/2$.
\end{prop}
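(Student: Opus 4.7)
The plan is to split $\cH(r,q^2)$ into four $G$-invariant pieces by using the unique $\tau$-invariant line $\bar\ell=\langle P,\tau(P)\rangle$ through a non-$\Sigma$-point $P$, and then to verify $G$-transitivity on each piece. First I would observe $\cH(r,q^2)\cap\Sigma=\cQ(r,q)$: since $P^{\perp}=P^{\perp_h}$ on $\Sigma$, the conditions $h(P,P)=0$ and $Q(P)=0$ coincide there. For $P\in\cH(r,q^2)\setminus\Sigma$, the line $\bar\ell$ is $\tau$-stable, hence is the $\F_{q^2}$-extension of some line $\ell$ of $\Sigma$, and I would split into cases according to the position of $\ell$ relative to $\cQ(r,q)$.

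If $\ell\subset\cQ(r,q)$, then $\bar\ell\subset\cH(r,q^2)\cap\cQ(r,q^2)$ and $P\in\cO$. If $\ell$ is tangent at a point $A$, then in a basis $x_1=A$, $x_2\in\ell\setminus\cQ(r,q)$ the Gram matrix of $h|_{\bar\ell}$ is diagonal with a zero entry, so $\bar\ell\cap\cH(r,q^2)=\{A\}\subset\Sigma$ and this case is vacuous. If $\ell$ is secant, then $h|_{\bar\ell}$ is non-degenerate hyperbolic and $\bar\ell\cap\cH(r,q^2)$ is a Baer subline of $q+1$ points, of which $2$ lie in $\cQ(r,q)$ and $q-1$ lie outside $\Sigma$; these join $\cS$. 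If $\ell$ is external, I would diagonalize $Q|_\ell=\alpha^2-\delta\beta^2$ with $\delta\in\F_q^*$ a non-square and check that the two points of $\bar\ell\cap\cQ(r,q^2)$ (which exist because every element of $\F_q^*$ is a square in $\F_{q^2}^*$ when $q$ is odd) are not on $\cH(r,q^2)$, while $\bar\ell\cap\cH(r,q^2)$ is a Baer subline of $q+1$ points all outside $\Sigma$; these join $\cE$. This yields the partition $\cH(r,q^2)=\cQ(r,q)\sqcup\cO\sqcup\cS\sqcup\cE$, with $\cE,\cS$ disjoint from $\cQ(r,q^2)$.

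Each piece is $G$-invariant by construction. $G$ is transitive on $\cQ(r,q)$ classically, and on $\cO$ by the orbit decomposition of $\cQ(r,q^2)$ recalled at the start of the section. For $\cS$ and $\cE$, Witt's theorem gives $G$-transitivity on secant (resp.\ external) lines of $\cQ(r,q)$, so it suffices to show that the stabilizer of a fixed $\ell$ acts transitively on the $\cS$- (resp.\ $\cE$-) points of $\bar\ell$. The induced group is $\PGO^+(2,q)$ or $\PGO^-(2,q)$; passing to eigencoordinates for the two isotropic directions of $Q|_\ell$, the rotation subgroup acts by multiplication by a subgroup of index $2$ in a cyclic group of order $q-1$ or $q+1$, producing two orbits of equal size on the Baer points in question. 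A reflection acts as inversion $z\mapsto z^{-1}$ and swaps these two orbits: in the secant case because $\alpha^2\in\F_q^*$ is a non-square whenever $\alpha$ is trace-zero and outside $\F_q$, and in the external case because a base parameter $y_0$ with $y_0^{q+1}=-1$ satisfies $y_0^2=g^{q-1}$, a generator of the norm-$1$ subgroup of $\F_{q^2}^*$ and hence not a square there.

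Finally, $|\cS|=(q-1)s$ and $|\cE|=(q+1)e$, where $s,e$ are the numbers of secant and external lines of $\cQ(r,q)$. Counting through a point of $\cQ(r,q)$ gives $s=q^{r-1}(q^r-1)/(2(q-1))$, and subtracting the known counts of totally isotropic, tangent and secant lines from the total line count of $\PG(r,q)$ gives $e=q^{r-1}(q^r-1)/(2(q+1))$; both produce $q^{r-1}(q^r-1)/2$. The main obstacle is the transitivity on $\cE$, since $\PGO^-(2,q)$ has order exactly $q+1=|\cE\cap\bar\ell|$ and so what is being proved is the regularity of this action; this is the square-class computation indicated above.
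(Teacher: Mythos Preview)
Your proposal is correct and follows essentially the same route as the paper: both arguments pass to the unique $\tau$-stable line $\bar\ell=\langle P,\tau(P)\rangle$ through a point $P\in\cH(r,q^2)\setminus\Sigma$, classify by $|\ell\cap\cQ(r,q)|\in\{q+1,0,2\}$ (ruling out the tangent case for the same reason you give), invoke classical transitivity of $G$ on each line type, and then establish transitivity of the line stabiliser on the relevant points of $\bar\ell\cap\cH(r,q^2)$. The only substantive difference is that the paper disposes of this last step by appealing to \cite[Proposition~2.2]{CP1}, whereas you supply a self-contained dihedral/square-class argument with $O^{\pm}(2,q)/\{\pm I\}$; your computation is correct (in eigencoordinates the relevant ratios $t$ satisfy $t^{q-1}=-1$, resp.\ $t^{q+1}=-1$, so $t^2$ is never a square in $\F_q^*$, resp.\ in $\mu_{q+1}$, forcing the reflection to swap the two rotation orbits). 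Your explicit derivation of $|\cE|=|\cS|=q^{r-1}(q^r-1)/2$ via the secant/external line counts is a small bonus: the paper leaves this implicit.
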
 
\begin{proof}
The group $G$ is transitive on points of $\cQ(r,q)$. Let $P \in \cH(r,q^2) \setminus \cQ(r,q)$, then $P$ lies on a unique extended line $\ell_P$ of $\Sigma$, where $\ell_P$ meets $\cQ(r,q)$ in either $q+1$ or $0$ or $2$ points. Note that $\ell_P \cap \Sigma$ cannot be tangent to $\cQ(r,q)$. Indeed a line of $\Sigma$ that is tangent to $\cQ(r,q)$ at the point $R$, when extended over $\GF(q^2)$, is also a tangent line to $\cH(r,q^2)$ at the point $R$. On the other hand the stabilizer of $\ell_P$ in $G$ permutes in a single orbit the $q^2-q$ or $q+1$ or $q-1$ points of $\cH(r,q^2) \setminus \cQ(r,q)$ on $\ell_P$, respectively, see also \cite[Proposition 2.2]{CP1}. The result now follows from the fact that the group $G$ is transitive on lines of $\Sigma$ that are either contained in $\cQ(r,q)$ or external to $\cQ(r,q)$ or secant to $\cQ(r,q)$.
\end{proof}

\begin{cor}
$\cH(r,q^2) \cap \Sigma = \cQ(r,q)$ and $\cH(r,q^2) \cap \cQ(r,q^2) = \cQ(r,q) \cup \cO$.
\end{cor}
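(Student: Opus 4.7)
The plan is to handle the two equalities separately, using two ingredients already on the table: the commuting-polarity identity $P^{\perp}=P^{\perp_h}$ (valid on $\Sigma$ because $\perp\perp_h=\tau$ fixes $\Sigma$ pointwise) and the two $G$-orbit decompositions that are already available.

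For $\cH(r,q^2)\cap\Sigma=\cQ(r,q)$, I would simply note that any $P\in\Sigma$ satisfies $\tau(P)=P$, and hence $P^{\perp}=P^{\perp_h}$ by the setup. Consequently the conditions $P\in\cH(r,q^2)$ (meaning $P\in P^{\perp_h}$) and $P\in\cQ(r,q^2)$ (meaning $P\in P^{\perp}$) are equivalent, so
\[
\cH(r,q^2)\cap\Sigma \;=\; \cQ(r,q^2)\cap\Sigma \;=\; \cQ(r,q),
\]
the last equality being the standard fact that the $\GF(q)$-points of the $\GF(q^2)$-extension of a $\GF(q)$-rational quadric return the original quadric. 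This disposes of the first statement and, as a by-product, legitimates the notation used in Proposition~\ref{prop1}.

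For the second equality, the easy inclusion $\cQ(r,q)\cup\cO\subseteq\cH(r,q^2)\cap\cQ(r,q^2)$ is immediate: $\cQ(r,q)\subseteq\cH(r,q^2)\cap\cQ(r,q^2)$ is built into the setup, and for $P\in\cO$ the unique extended line $\bar\ell$ through $P$ comes from a line $\ell\subseteq\cQ(r,q)$, so each point $P_0\in\ell$ satisfies $\ell\subseteq P_0^{\perp}=P_0^{\perp_h}$; a one-line sesquilinear check then gives $\bar\ell\subseteq\cH(r,q^2)\cap\cQ(r,q^2)$, and in particular $P$ lies in this intersection.

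For the reverse inclusion I would compare the two $G$-orbit lists. By \cite[Theorem~8]{BKLP}, $\cQ(r,q^2)$ decomposes under $G$ as $\cQ(r,q)\cup\cO\cup\cO'$, while by Proposition~\ref{prop1}, $\cH(r,q^2)$ decomposes as $\cQ(r,q)\cup\cO\cup\cE\cup\cS$. The set $\cH(r,q^2)\cap\cQ(r,q^2)$ is $G$-invariant, so it is a union of $G$-orbits of $\PG(r,q^2)$; hence everything reduces to showing $\cO'\cap\cH(r,q^2)=\emptyset$. Since $\cO'$ is a single $G$-orbit, either $\cO'\subseteq\cH(r,q^2)$ — in which case $\cO'$ would have to coincide with $\cE$ or with $\cS$ — or the intersection is empty. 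The first alternative is ruled out by the pairwise distinct cardinalities
\[
|\cO'|=\frac{q^{r-1}(q^{r}-1)}{q+1},\qquad |\cE|=|\cS|=\frac{q^{r-1}(q^{r}-1)}{2},
\]
which differ for every $q>1$. The only non-mechanical point in the whole argument is the sesquilinear verification that extended totally singular lines of $\cQ(r,q)$ lie in $\cH(r,q^2)$; every remaining step is a clean orbit-and-cardinality comparison.
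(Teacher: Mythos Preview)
Your proof is correct. The paper states the corollary without proof, as an immediate consequence of Proposition~\ref{prop1} and the commuting-polarity setup; your argument---using $P^{\perp}=P^{\perp_h}$ on $\Sigma$ for the first equality and an orbit-plus-cardinality comparison for the second---fleshes out precisely this implicit derivation.
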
 

\begin{lemma}\label{lemma2}
Let $\ell$ be line of $\cH(r,q^2)$, then the following possibilities occur:
\begin{itemize}
\item[i)] $\ell$ has $q+1$ points in common with $\cQ(r,q)$ and $q^2-q$ points in common with $\cO$,
\item[ii)] $\ell$ has a point in common with $\cQ(r,q)$ and $q^2$ points in common with $\cE$,
\item[iii)] $\ell$ has a point in common with $\cQ(r,q)$ and $q^2$ points in common with $\cS$,
\item[iv)] $\ell$ has a point in common with $\cQ(r,q)$, $r \ge 6$, and $q^2$ points in common with $\cO$, 
\item[v)] $\ell$ is contained in $\cO$, here $r \ge 8$,
\item[vi)] $\ell$ has a point in common with $\cO$ and $q^2$ points in common with $\cE$, here $r \ge 6$,
\item[vii)] $\ell$ has a point in common with $\cO$ and $q^2$ points in common with $\cS$, here $r \ge 6$,
\item[viii)] $\ell$ has $2$ points in common with $\cO$ and $(q^2-1)/2$ points in common with both $\cE$ and $\cS$,
\item[ix)] $\ell$ has $(q^2+1)/2$ points in common with both $\cE$ and $\cS$.
\end{itemize} 
\end{lemma}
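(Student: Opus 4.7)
The plan is to classify the line $\ell$ by the action of the Baer involution $\tau$, combined with the following key observation: for each $R \in \ell \setminus \Sigma$, the extended line $\ell_R = \langle R, \tau(R)\rangle$ is $\tau$-invariant and, as in the proof of Proposition \ref{prop1}, cannot be tangent to $\cQ(r,q)$; hence $R \in \cO$, $\cE$ or $\cS$ according as $\ell_R$ is contained in, external to, or secant to $\cQ(r,q)$. Also, since $\tau$ fixes $\Sigma$ pointwise and $\ell \cap \Sigma = \ell \cap \tau(\ell)$, this latter set has $q+1$, $1$, or $0$ points, splitting the analysis into three main cases.

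First, if $\tau(\ell) = \ell$ then $\ell = \bar\ell_0$ with $\ell_0 \subset \cH(r,q^2)\cap\Sigma = \cQ(r,q)$, giving case $i)$. Next, suppose $\tau(\ell)\ne\ell$ and $\ell \cap \tau(\ell) = \{P\}\subset\cQ(r,q)$. Then the $\tau$-invariant plane $\pi = \langle\ell,\tau(\ell)\rangle = \bar\pi_0$ meets $\cH(r,q^2)$ in at least two lines through $P$, so $\bar\pi_0\cap\cH(r,q^2)$ is a degenerate Hermitian curve (a pencil of $q+1$ lines through $P$) and $\pi_0\subset P^\perp$. Thus $\pi_0 \cap \cQ(r,q)$ is a cone at $P$ whose section by $\pi_0/P$ in the quotient $\cQ(r-2,q) = \cQ(r,q)/P$ is external, tangent, secant, or all of $\pi_0$. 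A parity argument on the $\tau$-permutation of the $q+1$ lines of $\bar\pi_0 \cap \cH(r,q^2)$ through $P$ (the fixed lines being the extensions of the $\cQ(r,q)$-lines of $\pi_0$ through $P$, with the rest pairing up) rules out the tangent section. The remaining three yield cases $ii)$ (external: every $\ell_R \subset \pi_0$ avoids $\cQ(r,q)$), $iii)$ (secant: every $\ell_R$ meets both lines of $\pi_0\cap\cQ(r,q)$ in a point $\ne P$), and $iv)$ ($\pi_0 \subset \cQ(r,q)$, needing a plane on $\cQ(r,q)$ and so $r\ge 6$).

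Finally, suppose $\ell \cap \tau(\ell) = \emptyset$; then $\bar S := \langle\ell,\tau(\ell)\rangle = \bar S_0$ is a $\tau$-invariant 3-space. For each $R \in \ell$ the Baer subline $m_R := \ell_R \cap S_0$ is a line of $S_0$, and two distinct $m_R, m_{R'}$ are disjoint (else $\ell$ and $\tau(\ell)$ would lie in a common plane, contradicting skewness). Hence $\cR_0 := \{m_R : R \in \ell\}$ is a line-spread of $S_0 \cong \PG(3,q)$. Letting $a,b,c$ count the lines of $\cR_0$ contained in, external to, and secant to $\cQ(r,q)$, one has $a+b+c=q^2+1$ and $\ell$ meets $\cO, \cE, \cS$ in exactly $a, b, c$ points respectively. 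The cases $v)$--$ix)$ correspond to
$$(a,b,c)\in\bigl\{(q^2{+}1,0,0),\,(1,q^2,0),\,(1,0,q^2),\,\bigl(2,\tfrac{q^2-1}{2},\tfrac{q^2-1}{2}\bigr),\,\bigl(0,\tfrac{q^2+1}{2},\tfrac{q^2+1}{2}\bigr)\bigr\}.$$
Here $a = q^2+1$ forces $S_0 \subset \cQ(r,q)$ and so requires a totally singular 3-space of $\cQ(r,q)$, i.e.\ $r\ge 8$; the subcases with $a = 1$ require a plane of $\cQ(r,q)$ inside $S_0$ and so $r \ge 6$; and the symmetric splits in cases $viii)$, $ix)$ reflect the fact that $S_0\cap\cQ(r,q)$ is a non-degenerate hyperbolic or elliptic quadric of $S_0$ together with the parity forced by $q$ odd.

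The main obstacle is the detailed classification in the skew sub-case, namely showing that $a$ is restricted to $\{0,1,2,q^2{+}1\}$ and establishing the symmetric $(q^2\pm 1)/2$ splits in cases $viii)$ and $ix)$. This requires a careful study of how the Baer spread $\cR_0$ of the 3-space $S_0 \subset \Sigma$ can be placed relative to the parabolic section $S_0\cap\cQ(r,q)$, using that no spread line is tangent to $\cQ(r,q)$ and that $q$ is odd.
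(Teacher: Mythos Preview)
Your outline follows essentially the same route as the paper: split by the action of $\tau$ on $\ell$, study the Baer subgeometry inside $\langle \ell,\tau(\ell)\rangle$, and in the skew case exploit the regular spread of the Baer $3$-space $\Pi_0=S_0$ induced by $\{\ell_R:R\in\ell\}$. Your parity argument for excluding the tangent section in the one-point case is a clean alternative to the paper's tangent-versus-secant contradiction (the paper instead observes that a line of $\pi_0$ not through $P$ would be tangent to $\cQ(r,q)$ yet extend to a secant of $\cH(r,q^2)$).

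The obstacle you flag is genuine, and the paper does not resolve it by a direct spread analysis either. It instead classifies $\Pi_0\cap\cQ(r,q)$ and $\Pi\cap\cH(r,q^2)$ via \cite[Table~15.4]{H1} and \cite[Table~19.1]{H1}: either $\Pi\cap\cH(r,q^2)$ is a non-degenerate $\cH(3,q^2)$, which forces $\Pi_0\cap\cQ(r,q)\in\{\cQ^+(3,q),\cQ^-(3,q)\}$ and yields cases $viii)$, $ix)$ (the $(q^2\pm1)/2$ split being quoted from \cite[Proposition~2.2, Lemma~3.1]{C}); or $\Pi\cap\cH(r,q^2)$ is $q+1$ planes through an extended $\cQ(r,q)$-line $s$, which forces $\Pi_0\cap\cQ(r,q)$ to be $s$ alone or two planes through $s$, giving cases $vi)$, $vii)$. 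This sidesteps having to argue directly that $a\in\{0,1,2,q^2+1\}$. One correction: your stated reason for $r\ge 6$ when $a=1$ is off. In case $vi)$ one has $\Pi_0\cap\cQ(r,q)=s$, a single line, so there is no plane of $\cQ(r,q)$ inside $S_0$; the constraint $r\ge 6$ arises because $\Pi\cap\cH(r,q^2)$ must contain planes (equivalently, for $r=4$ the $3$-space $\Pi_0$ is a hyperplane of $\Sigma$ and its parabolic section can never degenerate to a single line or a pair of planes).
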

\begin{proof}
A line of $\PG(r,q^2)$ meets $\Sigma$ in $0$, $1$ or $q+1$ points. Let $\ell$ be a line of $\cH(r,q^2)$. If $|\ell \cap \Sigma| = q+1$, then $\ell \cap \Sigma$ is a line of $\cQ(r,q)$ and hence $|\ell \cap \cO| = q^2-q$. If $|\ell \cap \Sigma| \ne q+1$, then either $|\ell \cap \cQ(r,q)| = 1$ or $|\ell \cap \cQ(r,q)| = 0$. 

If the former case occurs, let $P = \cQ(r,q) \cap \ell$. Then $\langle \ell, \tau(\ell) \rangle$ is a plane $\pi$ of $\PG(r,q^2)$ meeting $\Sigma$ in a Baer subplane $\pi_0$. Note that $P \in \pi_0$ and $\pi \subset P^\perp = P^{\perp_h}$. It follows that $\pi_0$ share with $\cQ(r,q)$ either the point $P$, and $\ell$ is a line of type $ii)$, or two lines of $\pi_0$ through $P$, and $\ell$ is a line of type $iii)$, or $\pi_0$ is contained in $\cQ(r,q)$, $r \ge 6$, and $\ell$ is a line of type $iv)$. Note that the plane $\pi_0$ cannot intersect $\cQ(r,q)$ in a line through $P$, otherwise we would find a line of $\Sigma$ that is tangent to $\cQ(r,q)$ and such that when extended over $\GF(q^2)$ should be a secant line to $\cH(r,q^2)$, a contradiction.    

If the latter case occurs, then $|\tau(\ell) \cap \Sigma| = 0$ and $\langle \ell, \tau(\ell) \rangle$ is a solid $\Pi$ of $\PG(r,q^2)$ meeting $\Sigma$ in a $\PG(3,q)$, say $\Pi_0$. If $\Pi_0 \subset \cQ(r,q)$, $r \ge 8$, and hence $\Pi \subset \cH(r,q^2)$, then $\ell$ is a line of type $v)$. Assume that $\Pi_0 \not\subset \cQ(r,q)$. Then the possibilities for $\Pi_0 \cap \cQ(r,q)$ are listed in \cite[Table 15.4]{H1} and, from \cite[Table 19.1]{H1}, we can say that $\Pi \cap \cH(r,q^2)$ is either a non--degenerate Hermitian surface $\cH(3,q^2)$ or $r \ge 6$ and it consists of $q+1$ planes through a line. Note that, if $\Pi \cap \cH(r,q^2)$ consists of $q+1$ planes through a line $s$, than the line $s$ has to be an extended line of $\cQ(r,q)$. Indeed $s$ has to be fixed by $\tau$. Moreover, every point of $\cH(r,q^2) \cap (\Pi \setminus \Pi_0)$ lies on a unique extended line of $\Pi_0$ and such a line shares $0$, $2$ or $q+1$ points with $\Pi_0 \cap \cQ(r,q)$. Taking into account the fact that the extended lines of $\Pi_0$ meeting both $\ell$ and $\tau(\ell)$ give rise to a line--spread of $\Pi_0$, we have that necessarily one of the possibilities described below occurs. The set $\Pi_0 \cap \cQ(r,q)$ consists of the line $s$ and $\Pi \cap \cH(r,q^2)$ consists of $q+1$ planes through the line $s$. In this case $\ell$ is a line of type $vi)$. The set $\Pi_0 \cap \cQ(r,q)$, $r \ge 6$, consists of two planes through the line $s$ and $\Pi \cap \cH(r,q^2)$ consists of $q+1$ planes through the line $s$. In this case $\ell$ is a line of type $vii)$. The set $\Pi_0 \cap \cQ(r,q)$ is a hyperbolic quadric $\cQ^+(3,q)$ and $\Pi \cap \cH(r,q^2)$ is a non--degenerate Hermitian surface $\cH(3,q^2)$. In this case $\ell$ is a line of type $viii)$, see also \cite[Proposition 2.2, Lemma 3.1]{C}. Finally, the set $\Pi_0 \cap \cQ(r,q)$ is an elliptic quadric $\cQ^-(3,q)$ and $\Pi \cap \cH(r,q^2)$ is a non--degenerate Hermitian surface $\cH(3,q^2)$. In this case $\ell$ is a line of type $ix)$. 
\end{proof}

\begin{lemma}\label{lemma3}
1) Through a point of $\cQ(r,q)$ there pass $\theta_{r-3,q}$ lines of type $i)$, $q(q^{r-2}-1)(q^{r-4}-1)/(q^2-1)$ lines of type $iv)$, $q^{r-3}(q^{r-2}-1)/2$ lines of type $ii)$ and $q^{r-3}(q^{r-2}-1)/2$ lines of type $iii)$.

2) Through a point of $\cO$ there pass one line of type $i)$, $q^2 (q^{r-4}-1)/(q-1)$ lines of type $iv)$, $q^3(q^{r-6}-1)(q^{r-4}-1)/(q^2-1)$ lines of type $v)$, $q^{r-3}(q^{r-4}-1)/2$ lines of type $vi)$, $q^{r-3}(q^{r-4}-1)/2$ lines of type $vii)$ and $q^{2r-5}$ lines of type $viii)$.   

3) Through a point of $\cE$ there pass $\theta_{r-3,q}$ lines of type $ii)$, $q(q^{r-2}-1)(q^{r-4}-1)/(q^2-1)$ lines of type $vi)$, $q^{r-3}(q^{r-2}-1)/2$ lines of type $viii)$ and $q^{r-3}(q^{r-2}-1)/2$ lines of type $ix)$. 

4) Through a point of $\cS$ there pass $\theta_{r-3,q}$ lines of type $iii)$, $q(q^{r-2}-1)(q^{r-4}-1)/(q^2-1)$ lines of type $vii)$, $q^{r-3}(q^{r-2}-1)/2$ lines of type $viii)$ and $q^{r-3}(q^{r-2}-1)/2$ lines of type $ix)$. 
\end{lemma}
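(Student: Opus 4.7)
The plan is to count, for a representative $P$ in each of the four orbits of Proposition~\ref{prop1}, the lines of $\cH(r,q^2)$ through $P$ of each type appearing in Lemma~\ref{lemma2}; in every case the total $\sum_T N_{X,T}$ must equal $|\cH(r-2,q^2)|=(q^{r-2}-1)(q^{r-1}+1)/(q^2-1)$ via the Hermitian cone $P^{\perp_h}\cap\cH(r,q^2)$ (vertex $P$, base $\cH(r-2,q^2)$), which I will use as a running consistency check.

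For item~1, with $P\in\cQ(r,q)$, the key point is that $P^{\perp_h}=P^\perp$ is $\tau$-invariant, so I may pick an $(r-2)$-space $\Delta_0$ of $P^\perp\cap\Sigma$ avoiding $P$ and work with $\Delta:=\bar\Delta_0\subset P^{\perp_h}$. Projection from $P$ bijects the lines of $\cH(r,q^2)$ through $P$ with $\cH(r,q^2)\cap\Delta=\cH(r-2,q^2)$, and since $\cQ(r,q)\cap\Delta_0=\cQ(r-2,q)$ (the base of the tangent cone $P^\perp\cap\cQ(r,q)$), Proposition~\ref{prop1} applied inside $\Delta$ partitions $\cH\cap\Delta$ into sub-orbits of sizes $\theta_{r-3,q}$, $q(q^{r-2}-1)(q^{r-4}-1)/(q^2-1)$, $q^{r-3}(q^{r-2}-1)/2$, and $q^{r-3}(q^{r-2}-1)/2$. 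To identify each sub-orbit with a line type, I track, for $Q\in\cH(r-2,q^2)\setminus\Sigma$, the Baer subline $\beta_Q:=Q\tau(Q)\cap\Sigma\subset\Delta_0\subset P^\perp$: because $\pi_0:=\langle P,\beta_Q\rangle\subset P^\perp$, the point $P$ lies in the radical of $\cQ(r,q)$ restricted to $\pi_0$, so $\pi_0\cap\cQ(r,q)$ is the cone from $P$ over $\beta_Q\cap\cQ(r-2,q)$; this equals $\pi_0$, two lines through $P$, or $\{P\}$ exactly when $\beta_Q$ is a line of $\cQ(r-2,q)$, a secant, or an external line, i.e.\ when $Q$ belongs to $\cO$, $\cS$, or $\cE$ respectively, which by Lemma~\ref{lemma2} labels the lines $PQ$ as types iv), iii), ii); the case $Q\in\cQ(r-2,q)$ gives $PQ\subset\Sigma$ of type i).

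For items~2--4, I extract the global line counts $n_T=|\cQ(r,q)|\,N_{\cQ,T}/m_{\cQ,T}$ from item~1 (with $m_{\cQ,T}$ read off Lemma~\ref{lemma2}), and a standard double count $|X|\,N_{X,T}=n_T\,m_{X,T}$ supplies all entries for types i)--iv) through $X\in\{\cO,\cE,\cS\}$; in particular $N_{\cO,i}=1$, $N_{\cO,iv}=q^2\theta_{r-5,q}$, $N_{\cE,ii}=N_{\cS,iii}=\theta_{r-3,q}$, and zero for the pairs not listed. For types v)--ix) I parameterize each line $\ell$ with $\ell\cap\Sigma=\emptyset$ by the $\tau$-invariant solid $\Pi:=\langle\ell,\tau(\ell)\rangle$ and its trace $\Pi_0:=\Pi\cap\Sigma$: the five possibilities for $\Pi_0\cap\cQ(r,q)$ exhibited in the proof of Lemma~\ref{lemma2} (namely $\Pi_0\subset\cQ$, one line, two planes through a line, $\cQ^+(3,q)$, $\cQ^-(3,q)$) correspond bijectively to types v)--ix). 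For each case I enumerate the $\PG(3,q)$-subspaces $\Pi_0$ of $\Sigma$ in that intersection class, multiply by the number of lines of $\bar\Pi_0$ skew to $\Pi_0$ lying in $\bar\Pi_0\cap\cH(r,q^2)$ (e.g.\ $q^4(q-1)(q^3-1)$ when $\bar\Pi_0\subset\cH$, i.e.\ $\Pi_0\subset\cQ$), and distribute the resulting global counts among $\cO,\cE,\cS$ by $|X|\,N_{X,T}=n_T\,m_{X,T}$.

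The main obstacle will be the accurate enumeration of $\PG(3,q)$-subspaces of $\Sigma$ by the intersection type with $\cQ(r,q)$, especially separating the two rank-$2$ degenerate possibilities of types vi) and vii). I would handle these via the residue quadric $\cQ(r-4,q)$ at a line $\ell_0\subset\cQ(r,q)$; for $P\in\cO$ on $\bar\ell_0$ this rests on the identity $\Sigma_0=\ell_0^\perp$, where $\Sigma_0:=P^{\perp_h}\cap\Sigma$, which follows from $\ell_P^\perp=P^{\perp_h}\cap\tau(P)^{\perp_h}$ being $\tau$-invariant and which identifies $\cQ(r,q)\cap\Sigma_0$ with the cone $\ell_0\cdot\cQ(r-4,q)$ of size $(q+1)+q^2\theta_{r-5,q}$; subtracting $|\ell_0|$ immediately yields $N_{\cO,iv}=q^2\theta_{r-5,q}$, and the remaining entries fall into place via the double-counting framework together with the consistency sum.
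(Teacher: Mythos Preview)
Your treatment of item~1 is essentially the paper's: project from $P$ onto a $\tau$-stable complement, identify the base $\cH(r-2,q^2)$, and read off the four sub-orbit sizes from Proposition~\ref{prop1}. Where you diverge is in items~2--4. The paper repeats the same cone-projection argument uniformly: for $P\in\cE$ (resp.\ $\cS$) one has $\ell_P^{\perp_h}\cap\cH(r,q^2)=\cH(r-2,q^2)$ containing $\ell_P^\perp\cap\cQ(r,q)=\cQ(r-2,q)$, and the lines through $P$ biject with this base, so Proposition~\ref{prop1} applied inside $\ell_P^\perp$ immediately yields the four counts with the type identification done exactly as in item~1; for $P\in\cO$ the same idea works one level deeper via $\ell_P^\perp$ and the residue $\cQ(r-4,q)$, with an extra complement step to isolate the $q^{2r-5}$ lines of type~viii). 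Your route instead pulls types i)--iv) from item~1 by the orbit-counting identity $|X|\,N_{X,T}=n_T\,m_{X,T}$ (which is correct and slick), and then handles types v)--ix) by a global enumeration of $\PG(3,q)$-subspaces of $\Sigma$ by their trace on $\cQ(r,q)$, counting in each extended solid the Hermitian lines skew to the Baer solid, and finally redistributing via the same double count. This is valid, but considerably more laborious than the paper's method: for items~3 and~4 the paper's argument is literally a one-line rerun of item~1, whereas your solid enumeration requires classifying all five trace types and counting the skew Hermitian lines in each. The payoff of your approach is that the orbit-counting identity gives several entries for free and provides built-in consistency checks; the payoff of the paper's approach is uniformity and geometric transparency, and it avoids the auxiliary enumeration altogether.
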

\begin{proof}
Let $P \in \cH(r,q^2)$ and, if $P \notin \cQ(r,q)$, let $\ell_{P}$ be the unique extended line of $\Sigma$ through $P$. We distinguish several cases.

\medskip
\fbox{$P \in \cQ(r,q)$}
\medskip

\noindent
Let $P$ be a point of $\cQ(r,q)$ and let $\Gamma$ be an $(r-2)$--space of $\PG(r,q^2)$ contained in $P^{\perp_h} = P^\perp$, not containing $P$ and such that $\Gamma \cap \Sigma$ is an $(r-2)$--space of $\Sigma$. Then $\Gamma \cap \cQ(r,q)$ is a parabolic polar space $\cQ(r-2,q)$ embedded in $\cQ(r,q)$ and $P^\perp \cap \cQ(r,q)$ is a cone having as vertex the point $P$ and as basis $\cQ(r-2,q)$. On the other hand $\Gamma \cap \cH(r,q^2)$ is a Hermitian polar space $\cH(r-2,q^2)$ embedded in $\cH(r,q^2)$ and $P^{\perp_h} \cap \cH(r,q^2)$ is a cone having as vertex the point $P$ and as basis $\cH(r-2,q^2)$, where $\cQ(r-2,q) \subset \cH(r-2,q^2)$. Let $G'$ be the group of projectivities of $\Gamma$ isomorphic to $\PGO(r-1,q)$ stabilizing the parabolic polar space $\cQ(r-2,q)$. From Proposition \ref{prop1}, the group $G'$ has four orbits on points of $\cH(r-2,q^2)$, say $\cQ(r-2,q)$, $\cO'$, $\cE'$ and $\cS'$, where $\cQ(r-2,q) \subset \cQ(r,q)$, $\cO' \subset \cO$, $\cE' \subset \cE$, $\cS' \subset \cS$. Let $R$ be a point of $\cH(r-2,q^2)$ and let $g$ be the line of $\cH(r,q^2)$ joining $P$ and $R$. If $R \in \cQ(r-2,q)$, then $g$ is a line of type $i)$. If $R \in \cO'$, then the line $\ell_{R}$ is an extended line of $\cQ(r-2,q)$. It follows that $\langle \ell_{R}, g \rangle$ is a plane $\sigma$ of $\cH(r,q^2)$, $r \ge 6$, such that $\sigma \cap \Sigma$ is a Baer subplane and hence $g$ is a line of type $iv)$. If $R \in \cE'$, then $\ell_{R}$ is an extended line of $\Gamma \cap \Sigma$ such that $\ell_{R} \cap \Sigma$ is external to $\cQ(r-2,q)$. Since the plane $\langle P, \ell_{R} \rangle$ meets $\Sigma$ in a Baer subplane which in turn shares with $\cQ(r,q)$ solely the point $P$, it follows that $g$ is a line of type $ii)$. Analogougly, if $R \in \cS'$, we have that $g$ is a line of type $iii)$. 

\medskip
\fbox{$P \in \cO$}
\medskip

\noindent
If $P \in \cO$, then let $\Gamma'$ be an $(r-4)$--space of $\PG(r,q^2)$ contained in $\ell_P^{\perp_h} = \ell_P^\perp$, not containing $\ell_P$ and such that $\Gamma' \cap \Sigma$ is an $(r-4)$--space of $\Sigma$. Then $\Gamma' \cap \cQ(r,q)$ is a parabolic polar space $\cQ(r-4,q)$ embedded in $\cQ(r,q)$ and $\ell_P^\perp \cap \cQ(r,q)$ is a cone having as vertex the line $\ell_P$ and as basis $\cQ(r-4,q)$. On the other hand $\Gamma' \cap \cH(r,q^2)$ is a Hermitian polar space $\cH(r-4,q^2)$ embedded in $\cH(r,q^2)$ and $\ell_P^{\perp_h} \cap \cH(r,q^2)$ is a cone having as vertex the line $\ell_P$ and as basis $\cH(r-4,q^2)$, where $\cQ(r-4,q) \subset \cH(r-4,q^2)$. Note that if a line of $\cH(r,q^2)$ through $P$ intersect $\cQ(r,q)$ in one point, then such a point lies on $(\ell_P^{\perp} \cap \Sigma) \setminus \ell_P$. Let $G''$ be the group of projectivities of $\Gamma'$ isomorphic to $\PGO(r-3,q)$ stabilizing the parabolic polar space $\cQ(r-4,q)$. From Proposition \ref{prop1}, the group $G''$ has four orbits on points of $\cH(r-4,q^2)$, say $\cQ(r-4,q)$, $\cO''$, $\cE''$ and $\cS''$, where $\cQ(r-4,q) \subset \cQ(r,q)$, $\cO'' \subset \cO$, $\cE'' \subset \cE$, $\cS'' \subset \cS$. 

The hyperplane $P^{\perp_h}$ meets $\cH(r,q^2)$ in a cone having as vertex the point $P$ and as basis $\cH(r-2,q^2)$. In particular, $\cH(r-2,q^2)$ can be chosen in such a way that $\cQ(r-4,q) \subseteq \cH(r-4,q^2) \subseteq \cH(r-2,q^2)$ and that the point $T:=\cH(r-2,q^2) \cap \ell_P$ belongs to $\Sigma$. Let $\Lambda$ be the $(r-2)$--space of $\PG(r,q^2)$ containing $\cH(r-2,q^2)$, then $\Lambda \cap \Sigma$ is the $(r-3)$--space of $\Sigma$ containing $T$ and $\cQ(r-4,q)$. Hence such an $(r-3)$--space, when extended over $\GF(q^2)$ coincides with $\langle T, \Gamma' \rangle$, which meets $\cH(r-2,q^2)$ in the cone having as vertex the point $T$ and as basis $\cH(r-4,q^2)$. It follows that $\cH(r-2,q^2) \setminus \langle T, \Gamma' \rangle$ consists of $q^{2r-5}$ points.

Let $R$ be a point of $\cH(r-2,q^2)$ and let $g$ be the line of $\cH(r,q^2)$ joining $P$ and $R$. If $R = T$, then $g = \ell_P$ and $\ell_P$ is the unique line of type $i)$ passing through $P$. Assume that $R \ne T$. If $R$ lies on a line joining $T$ with a point of $\cQ(r-4,q)$, then $g$ is a line of type $iv)$. If $R$ lies on line joining $T$ with a point of $\cO''$, then the line $\ell_{R}$ is an extended line of a parabolic polar space $\cQ(r-4,q)$, $r \ge 8$, embedded in $\ell_P^\perp \cap \Sigma$ and disjoint from $\ell_P$. It follows that $\langle \ell_P, \ell_{R} \rangle$ is a solid $\sigma'$ of $\cH(r,q^2)$, $r \ge 8$, such that $\sigma' \cap \Sigma$ is a Baer subgeometry isomorphic to $\PG(3,q)$ and hence $g$ is a line of type $v)$. If $R$ lies on a line joining $T$ with a point of $\cE''$, then $\ell_{R}$ is an extended line of $\ell_P^\perp \cap \Sigma$ such that $\ell_{R} \cap \Sigma$ is external to $\cQ(r,q)$. Since the plane $\langle P, \ell_{R} \rangle$ meets $\Sigma$ in a Baer subplane which in turn shares with $\cQ(r,q)$ solely the point $P$, it follows that $g$ is a line of type $vi)$. Analogougly, if $R$ lies on a line joining $T$ with a point of $\cS''$, we have that $g$ is a line of type $vii)$. Finally, let $R$ be a point of $\cH(r-2,q^2) \setminus \langle T, \Gamma' \rangle$. Then the line joining $R$ and $\tau(R)$ is disjoint from $\langle T, \Gamma' \rangle$ and hence the solid generated by $\ell_P$ and the line $\ell_R$ meets $\cQ(r,q)$ in a hyperbolic quadric $\cQ^+(3,q)$. Hence $g$ is a line of type $viii)$.

\medskip
\fbox{$P \in \cE$}
\medskip

\noindent
If $P \in \cE$, then $|\ell_{P} \cap \cQ(r,q)| = 0$, $\ell_{P}^\perp \cap \cQ(r,q) = \cQ(r-2,q)$, $\ell_{P}^{\perp_h} \cap \cH(r,q^2) = \cH(r-2,q^2)$, where $\cQ(r-2,q) \subset \cH(r-2,q^2)$. Moreover ${P}^{\perp_h}$ meets $\cH(r,q^2)$ in a cone having as vertex the point $P$ and as basis $\cH(r-2,q^2)$. Let $G'$ be the group of projectivities of $\ell_{P}^\perp \cap \Sigma$ isomorphic to $\PGO(r-1,q)$ stabilizing the parabolic polar space $\cQ(r-2,q)$. From Proposition \ref{prop1}, the group $G'$ has four orbits on points of $\cH(r-2,q^2)$, say $\cQ(r-2,q)$, $\cO'$, $\cE'$ and $\cS'$, where $\cQ(r-2,q) \subset \cQ(r,q)$, $\cO' \subset \cO$, $\cE' \subset \cE$, $\cS' \subset \cS$. Let $R$ be a point of $\cH(r-2,q^2)$ and let $g$ be the line of $\cH(r,q^2)$ joining $P$ and $R$. If $R \in \cQ(r-2,q)$, then $g$ is a line of type $ii)$. If $R \in \cO'$, then the line $\ell_{R}$ is an extended line of $\cQ(r-2,q)$. It follows that $\langle \ell_{R}, g \rangle$ is a plane $\sigma$ of $\cH(r,q^2)$ such that $\sigma \cap \Sigma = \ell_{R}$ and hence $g$ is a line of type $vi)$. If $R \in \cE'$, then $\ell_{R}$ is an extended line of $\Sigma$ such that $\ell_{R} \cap \Sigma$ is external to $\cQ(r,q)$. Let $\Lambda$ be the solid generated by $\ell_{P}$ and $\ell_{R}$. Since $\ell_{P}^\perp \cap \Lambda = \ell_{R}$ and $\ell_{R}^\perp \cap \Lambda = \ell_{P}$, we have that $\Lambda \cap \cQ(r,q) = \cQ^+(3,q)$ and $g$ is a line of type $viii)$. Analogougly, if $R \in \cS'$, we have that $\Lambda \cap \cQ(r,q) = \cQ^-(3,q)$ and $g$ is a line of type $ix)$. A similar argument holds if $P \in \cS$. 
\end{proof}

\begin{theorem}
Each of the four $G$--orbits $\cQ(r,q)$, $\cO$, $\cE$, $\cS$ is an $i$--tight set of $\cH(r,q^2)$ with parameter $i= q+1$, $q^{r-1}-q$, $(q^{r+1}-q^{r-1})/2$, $(q^{r+1}-q^{r-1})/2$, respectively.
\end{theorem}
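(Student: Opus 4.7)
The plan is to combine Lemmas \ref{lemma2} and \ref{lemma3} to compute $|P^{\perp_h}\cap\cX|$ directly for a representative point $P$ in each of the four orbits $\cQ(r,q), \cO, \cE, \cS$ and each candidate set $\cX$ drawn from the same list. Since $G$ acts transitively on each orbit, it suffices to verify the tight-set identity
$$
|P^{\perp_h}\cap\cX| \;=\; i\,\theta_{(r-4)/2,\,q^2} \;+\; \begin{cases} q^{r-2} & \text{if } P\in\cX, \\ 0 & \text{if } P\notin\cX, \end{cases}
$$
for one such representative in each orbit, for each of the four claimed values $i \in \{q+1,\; q^{r-1}-q,\; (q^{r+1}-q^{r-1})/2,\; (q^{r+1}-q^{r-1})/2\}$.

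The intersection $P^{\perp_h}\cap\cH(r,q^2)$ is the cone over $\cH(r-2,q^2)$ with vertex $P$, namely the union of $\{P\}$ with the other points of $\cH(r,q^2)$ on lines of $\cH(r,q^2)$ through $P$. Lemma \ref{lemma3} already enumerates, according to the orbit of $P$, how many lines of each of the types $i),\dots,ix)$ from Lemma \ref{lemma2} pass through $P$, and Lemma \ref{lemma2} records how the $q^2+1$ points of a line of each type distribute across $\cQ(r,q), \cO, \cE, \cS$. Multiplying type-counts by per-line intersection profiles and summing yields $|P^{\perp_h}\cap\cX|$ as an explicit expression in $r$ and $q$ that we compare against the displayed formula.

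Two structural observations cut down the work. First, the pair $(\cE,\cS)$ is symmetric in Lemmas \ref{lemma2} and \ref{lemma3}: types $ii)$ and $iii)$ are swapped, $vi)$ and $vii)$ are swapped, and types $viii)$, $ix)$ split their off-$\cO$ points in equal halves between $\cE$ and $\cS$; consequently the $\cE$-count and the $\cS$-count mirror each other, forcing the same tight-set parameter and explaining why both claimed values of $i$ coincide. Second, the total
$$
|P^{\perp_h}\cap\cQ(r,q)| + |P^{\perp_h}\cap\cO| + |P^{\perp_h}\cap\cE| + |P^{\perp_h}\cap\cS| \;=\; |P^{\perp_h}\cap\cH(r,q^2)| \;=\; 1 + q^{2}|\cH(r-2,q^2)|
$$
gives a built-in consistency check that removes one of the four sums per representative point.

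The main obstacle is the sheer bookkeeping across the sixteen (point-orbit, set-orbit) combinations, compounded by the dimension caveats in Lemma \ref{lemma2}: types $iv), vi), vii)$ need $r\ge 6$ and type $v)$ needs $r\ge 8$. One must verify that the corresponding line-counts supplied by Lemma \ref{lemma3} vanish in the degenerate low-dimensional cases (e.g.\ the factors $q^{r-4}-1$ and $(q^{r-4}-1)(q^{r-6}-1)$ collapse to zero when $r=4$, respectively $r\in\{6,4\}$), so that a single uniform identity covers every admissible $r\ge 4$ even. Once that is arranged, matching the tallies to $i\,\theta_{(r-4)/2,\,q^2} + \varepsilon\, q^{r-2}$ with the claimed $i$'s is a routine algebraic simplification using $\theta_{n,q^2} = (q^{2(n+1)}-1)/(q^2-1)$.
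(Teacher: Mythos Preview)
Your proposal is correct and follows essentially the same approach as the paper: the paper's proof likewise invokes Lemmas \ref{lemma2} and \ref{lemma3} and then writes out, for each of the four orbits of $P$ and each of the four target sets $\cX$, the explicit line-by-line tally of $|P^{\perp_h}\cap\cX|$, simplifying each to the required $i\,\theta_{(r-4)/2,q^2}+\varepsilon q^{r-2}$. Your symmetry remark on $\cE\leftrightarrow\cS$ and the total-count consistency check are not stated in the paper but are sound shortcuts within the same framework.
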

\begin{proof}
Let $P$ be a point of $\cH(r,q^2)$. Taking into account Lemma \ref{lemma2} and Lemma \ref{lemma3}, we have that
$$
|P^{\perp_h} \cap \cQ(r,q)| =
\begin{cases}
q |\cQ(r-2,q)| + 1 = (q+1) \theta_{\frac{r-4}{2},q^2} + q^{r-2} & \mbox{ if } P \in \cQ(r,q), \\

q+1 + q^2 \frac{q^{r-4}-1}{q-1} = \frac{q^{r-2}-1}{q-1} = (q+1) \theta_{\frac{r-4}{2},q^2} & \mbox{ if } P \in \cO, \\

|\cQ(r-2,q)| = (q+1) \theta_{\frac{r-4}{2},q^2} & \mbox{ if } P \in \cE \cup \cS, \\
\end{cases} 
$$
$$
|P^{\perp_h} \cap \cO| =
\begin{cases}
q^3\frac{(q^{r-2}-1)(q^{r-4}-1)}{q^2-1} + (q^2-q) \theta_{r-3,q} = (q^{r-1}-q) \theta_{\frac{r-4}{2},q^2} & \mbox{ if } P \in \cQ(r,q), \\

(q^2-q) + (q^2-1) q^2 \theta_{r-3,q} + q^5 (q^{r-6}-1)\theta_{\frac{r-6}{2},q^2} +  \\
+  q^{2r-5} = (q^{r-1}-q) \theta_{\frac{r-4}{2},q^2} + q^{r-2} & \mbox{ if } P \in \cO, \\

q \frac{(q^{r-2}-1)(q^{r-4}-1)}{q^2-1} + 2 q^{r-3} \frac{q^{r-2}-1}{2} = (q^{r-1}-q) \theta_{\frac{r-4}{2},q^2} & \mbox{ if } P \in \cE \cup \cS, \\
\end{cases} 
$$
$$ 
|P^{\perp_h} \cap \cE| =
\begin{cases}
q^{r-1} \frac{q^{r-2}-1}{2} = \frac{q^{r+1}-q^{r-1}}{2} \theta_{\frac{r-4}{2},q^2} & \mbox{ if } P \in \cQ(r,q), \\

q^{r-1} \frac{q^{r-4}-1}{2} + \frac{q^2-1}{2} q^{2r-5} =  \frac{q^{r+1}-q^{r-1}}{2} \theta_{\frac{r-4}{2},q^2} & \mbox{ if } P \in \cO, \\

(q^2-1) \frac{q^{r-2}-1}{q-1} + (q^2-1) q \frac{(q^{r-2}-1)(q^{r-4}-1)}{q^2-1} + \\
+ q^{r-3} \frac{q^2-3}{2} \frac{q^{r-2}-1}{2} + q^{r-3} \frac{q^2-1}{2} \frac{q^{r-2}-1}{2} + 1 = \frac{q^{r+1}-q^{r-1}}{2} \theta_{\frac{r-4}{2},q^2} + q^{r-2} & \mbox{ if } P \in \cE, \\

q^{r-3} \frac{q^2-1}{2} \frac{q^{r-2}-1}{2} + q^{r-3} \frac{q^2+1}{2} \frac{q^{r-2}-1}{2} = \frac{q^{r+1}-q^{r-1}}{2} \theta_{\frac{r-4}{2},q^2} & \mbox{ if } P \in \cS, \\
\end{cases} 
$$
$$
|P^{\perp_h} \cap \cS| =
\begin{cases}
q^{r-1} \frac{q^{r-2}-1}{2} = \frac{q^{r+1}-q^{r-1}}{2} \theta_{\frac{r-4}{2},q^2} & \mbox{ if } P \in \cQ(r,q), \\

q^{r-1} \frac{q^{r-4}-1}{2} + \frac{q^2-1}{2} q^{2r-5} =  \frac{q^{r+1}-q^{r-1}}{2} \theta_{\frac{r-4}{2},q^2} & \mbox{ if } P \in \cO, \\

q^{r-3} \frac{q^2-1}{2} \frac{q^{r-2}-1}{2} + q^{r-3} \frac{q^2+1}{2} \frac{q^{r-2}-1}{2} = \frac{q^{r+1}-q^{r-1}}{2} \theta_{\frac{r-4}{2},q^2} & \mbox{ if } P \in \cE, \\

(q^2-1) \frac{q^{r-2}-1}{q-1} + (q^2-1) q \frac{(q^{r-2}-1)(q^{r-4}-1)}{q^2-1} + \\
+ q^{r-3} \frac{q^2-3}{2} \frac{q^{r-2}-1}{2} + q^{r-3} \frac{q^2-1}{2} \frac{q^{r-2}-1}{2} + 1 = \frac{q^{r+1}-q^{r-1}}{2} \theta_{\frac{r-4}{2},q^2} + q^{r-2} & \mbox{ if } P \in \cS. \\
\end{cases} 
$$
\end{proof}

An {\em $m$--cover of lines of $\cQ(r,q)$} is a set of lines $\cL$ of $\cQ(r,q)$ such that every point of $\cQ(r,q)$ contains $m$ lines of $\cL$. In this section we investigate properties of $m$--covers of $\cQ(r,q)$. Similar arguments to that used in the proof of Lemma \ref{lemma1} yield the following. 

\begin{lemma}\label{lemma4}
Let $\cL$ be an $m$--cover of $\cQ(r,q)$, $r \ge 4$ even, then
\begin{itemize} 
\item[$i)$] $\cL$ contains $m \theta_{r-1,q}/(q+1)$ lines, 
\item[$ii)$] a hyperplane $H$ of $\PG(r,q)$ contains $m \theta_{r-3,q}/(q+1)$ lines of $\cL$, if $H$ is tangent, $m (q^{\frac{r}{2}}-1)(q^{\frac{r-4}{2}}+1)/(q^2-1)$, if $H \cap \cQ(r,q) = \cQ^+(r-1,q)$, $m (q^{\frac{r}{2}}+1)(q^{\frac{r-4}{2}}-1)/(q^2-1)$, if $H \cap \cQ(r,q) = \cQ^-(r-1,q)$,  
\item[$iii)$] let $\Sigma$ be an $(r-2)$--space of $\PG(r,q)$, where $\Sigma = \ell^\perp$ and $|\ell \cap \cQ(r,q)| \in \{0, 2, q+1\}$. If $x$ is the number of lines of $\cL$ contained in $\Sigma$ and $y$ is the number of lines of $\cL$ meeting $\Sigma$ in one point, then $x(q+1)+y = m \theta_{r-3,q}$.  
\end{itemize}
\end{lemma}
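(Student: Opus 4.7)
The plan is to adapt the double-counting strategy of Lemma~\ref{lemma1} to the parabolic setting; the new ingredient in each part is the cardinality of an intersection of $\cQ(r,q)$ with the relevant projective subspace. For (i), I would count in two ways the pairs $(P,s)$ with $P\in\cQ(r,q)$, $s\in\cL$ and $P\in s$: the total equals $m\,|\cQ(r,q)|=m\theta_{r-1,q}$, and it also equals $(q+1)|\cL|$ since every line of $\cL$ is totally singular and contributes exactly $q+1$ points of $\cQ(r,q)$. Dividing yields (i).

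For (ii), I would fix a hyperplane $H$ of $\PG(r,q)$, let $x_H$ be the number of lines of $\cL$ contained in $H$, and restrict the same double count to $H\cap\cQ(r,q)$. A line of $\cL$ is either contained in $H$ or meets $H$ in exactly one point, hence
$$m\,|H\cap\cQ(r,q)|=x_H(q+1)+(|\cL|-x_H)=qx_H+|\cL|,$$
so $x_H=(m\,|H\cap\cQ(r,q)|-|\cL|)/q$. Three sub-cases arise according to how $H$ meets $\cQ(r,q)$ when $r$ is even: a cone with apex a point over $\cQ(r-2,q)$, of cardinality $\theta_{r-2,q}$, when $H$ is tangent; $\cQ^+(r-1,q)$, of cardinality $(q^{r/2-1}+1)(q^{r/2}-1)/(q-1)$; or $\cQ^-(r-1,q)$, of cardinality $(q^{r/2-1}-1)(q^{r/2}+1)/(q-1)$. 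Substituting $|\cL|=m\theta_{r-1,q}/(q+1)$ and using the identity $(q+1)\theta_{r-2,q}-\theta_{r-1,q}=q\theta_{r-3,q}$ in the tangent case, and the analogous factorizations $(q^{r/2-1}\pm 1)(q+1)-(q^{r/2}\pm 1)=q(q^{r/2-2}\pm 1)$ in the non-degenerate cases, produces the three stated values.

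For (iii), the geometric input is the identity $|\Sigma\cap\cQ(r,q)|=\theta_{r-3,q}$, which I would verify case by case from the hypothesis $|\ell\cap\cQ(r,q)|\in\{0,2,q+1\}$. If $\ell$ is external or secant, the restriction of the quadratic form to the vector plane $\ell$ is non-degenerate, so $\ell\cap\ell^\perp=\emptyset$ and the induced form on $\ell^\perp$ is non-degenerate on a vector space of odd dimension $r-1$; hence $\Sigma\cap\cQ(r,q)$ is a parabolic quadric $\cQ(r-2,q)$, of cardinality $\theta_{r-3,q}$. If $\ell$ is totally singular, then $\ell\subseteq\ell^\perp$, the form on $\ell^\perp$ has radical $\ell$, and $\Sigma\cap\cQ(r,q)$ is a cone with vertex $\ell$ over a non-degenerate $\cQ(r-4,q)$; a direct count gives $(q+1)+q^2\theta_{r-5,q}=\theta_{r-3,q}$. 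The hypothesis precisely excludes the tangent case, for which $\ell^\perp$ would acquire a one-dimensional radical and a different cardinality would result. With $|\Sigma\cap\cQ(r,q)|=\theta_{r-3,q}$ in hand, double counting the pairs $(P,s)$ with $P\in\Sigma\cap\cQ(r,q)$, $s\in\cL$ and $P\in s$ gives $m\theta_{r-3,q}=x(q+1)+y$, since each line of $\cL$ either lies in $\Sigma$, meets it in one point, or is disjoint from it.

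The only genuinely non-trivial steps are the algebraic simplification in (ii) and the case analysis in (iii) showing that $|\Sigma\cap\cQ(r,q)|=\theta_{r-3,q}$ uniformly across all three allowed values of $|\ell\cap\cQ(r,q)|$; once these are settled, the three double counts proceed exactly as in Lemma~\ref{lemma1}.
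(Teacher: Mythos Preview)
Your proposal is correct and is precisely the argument the paper has in mind: the paper's own ``proof'' is just the sentence ``Similar arguments to that used in the proof of Lemma~\ref{lemma1} yield the following,'' and your double counts are exactly those of Lemma~\ref{lemma1} with the point set $\cQ(r,q)$ (and the appropriate hyperplane and $(r-2)$--space sections) substituted for $\PG(r,q)$. The only cosmetic difference is that for part~$(iii)$ Lemma~\ref{lemma1} sums the hyperplane counts from part~$(ii)$ over the pencil through $\Sigma$, whereas you double count incidences in $\Sigma\cap\cQ(r,q)$ directly; both routes reduce to the same identity $x(q+1)+y=m\,|\Sigma\cap\cQ(r,q)|$ together with your case analysis showing $|\Sigma\cap\cQ(r,q)|=\theta_{r-3,q}$.
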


We introduce the following definitions.

	\begin{defin}
An $m$--cover $\cL$ of ${\cQ}(r,q)$ is said to be {\em $(r-2)$--dual of parabolic type $I$} if 
$$\vert\{r\in \cL:r\subseteq \ell^\perp\}\vert=
	\begin{cases}
\frac{m \theta_{r-5,q}}{q+1}+q^{r-4} & \mbox{ if } \ell\in{\cL}, \\
\frac{m \theta_{r-5,q}}{q+1} & \mbox{ if } \ell\not\in{\cL} \mbox{ and } |\ell \cap \cQ(r,q)| \in \{0,2,q+1\}.  
	\end{cases} 
$$		
	\end{defin}

	\begin{defin}
An $m$--cover $\cL$ of ${\cQ}(r,q)$ is said to be {\em $(r-2)$--dual of parabolic type $II$} if 
$$\vert\{r\in \cL:r\subseteq \ell^\perp\}\vert=
	\begin{cases}
\frac{m \theta_{r-5,q}}{q+1}+q^{r-4} & \mbox{ if } \ell\in{\cL}, \\
\frac{m \theta_{r-5,q}}{q+1} & \mbox{ if } \ell\not\in{\cL} \mbox{ and } |\ell \cap \cQ(r,q)| \in \{0,q+1\}.  
	\end{cases} 
$$		
	\end{defin}

Basing on the definitions above we have the following results.  
	
\begin{theorem}\label{main4}
Let $\cL$ be an $m$--cover of ${\cQ}(r,q)$ and let ${\bar \cL}$ be the set of points of ${\cH}(r,q^2)$ lying on the extended lines of $\cL$. Then ${\bar \cL}$ is an $(m(q^2-q)+q+1)$--tight set of ${\cH}(r,q^2)$ if and only if $\cL$ is an $(r-2)$--dual cover of parabolic type $I$. 
\end{theorem}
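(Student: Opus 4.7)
The plan is to imitate the proof of Theorem \ref{main1}, the main novelty being that $\bar\cL\cap\Sigma=\cQ(r,q)$ rather than all of $\Sigma$, so the $\Sigma$-contribution to $|P^{\perp_h}\cap\bar\cL|$ is governed by a separate geometric identity about polars of lines in $\cQ(r,q)$. I would first record the polar-hyperplane dictionary: using $\perp\perp_h=\tau$ and the fact that $\tau$ fixes $\Sigma$ pointwise, for $P\in\cH(r,q^2)\setminus\cQ(r,q)$ with unique extended line $\ell_P$ of $\Sigma$ through $P$ one has $P^{\perp_h}\cap\Sigma=\ell_P^\perp$, an $(r-2)$-space of $\Sigma$; for $P\in\cQ(r,q)$, $P^{\perp_h}\cap\Sigma=P^\perp$ is the hyperplane tangent to $\cQ(r,q)$ at $P$. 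By Proposition \ref{prop1} the line $\ell_P\cap\Sigma$ satisfies $|\ell_P\cap\cQ(r,q)|\in\{0,2,q+1\}$ according as $P\in\cE,\cS,\cO$ (tangent lines do not arise, since they extend to tangent lines of $\cH(r,q^2)$), and $P\in\bar\cL$ precisely when either $P\in\cQ(r,q)$, or $P\in\cO$ with $\ell_P\cap\Sigma\in\cL$; this uses the observation that two distinct extended lines of $\Sigma$ can meet only in $\Sigma$ (otherwise the $\tau$-image of a non-$\Sigma$ common point would give a second one).

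Setting $x=|\{\ell\in\cL:\ell\subset P^{\perp_h}\cap\Sigma\}|$ and $y=|\{\ell\in\cL:|\ell\cap P^{\perp_h}\cap\Sigma|=1\}|$, the lines of $\cL$ contribute $x(q^2-q)$ points of $\bar\cL\setminus\Sigma$ that lie on extended lines contained in $P^{\perp_h}$, plus $|\cL|-x-y$ further points (one per line of $\cL$ whose extension meets $P^{\perp_h}$ at a point outside $\Sigma$), so that, after using Lemma \ref{lemma4}$(iii)$ to eliminate $y$,
\begin{equation*}
|P^{\perp_h}\cap\bar\cL|=|\cQ(r,q)\cap P^{\perp_h}|+xq^2+|\cL|-m\theta_{r-3,q}.
\end{equation*}
The key combinatorial identity is then $|\cQ(r,q)\cap\ell^\perp|=\theta_{r-3,q}$ for every line $\ell$ of $\Sigma$ with $|\ell\cap\cQ(r,q)|\in\{0,2,q+1\}$: for $\ell\subset\cQ(r,q)$, $\ell^\perp\cap\cQ(r,q)$ is a cone with vertex $\ell$ and base the parabolic quadric $\cQ(r-4,q)$, which gives $(q+1)+q^2\theta_{r-5,q}=\theta_{r-3,q}$; for $\ell$ anisotropic (secant or external), the restriction of the quadratic form to $\ell^\perp$ is non-degenerate, so $\ell^\perp\cap\cQ(r,q)$ is a non-degenerate quadric in the even-dimensional projective space $\ell^\perp$, forcing a parabolic $\cQ(r-2,q)$ of size $\theta_{r-3,q}$.

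Plugging this identity into the count, using $|\cL|=m\theta_{r-1,q}/(q+1)$ from Lemma \ref{lemma4}$(i)$, and simplifying via $(q^2+1)\theta_{r-3,q}-\theta_{r-1,q}=q^2\theta_{r-5,q}$, the tight-set condition at $P\notin\Sigma$ collapses to $x=m\theta_{r-5,q}/(q+1)+\varepsilon q^{r-4}$, where $\varepsilon=1$ if $P\in\bar\cL$ and $\varepsilon=0$ otherwise; this is exactly the $(r-2)$-dual condition of parabolic type $I$ applied to $\ell_P\cap\Sigma$. For $P\in\cQ(r,q)$ the tight condition holds unconditionally, since Lemma \ref{lemma4}$(ii)$ for tangent hyperplanes together with $1+q\theta_{r-3,q}=\theta_{r-3,q}+q^{r-2}$ forces equality. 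For the converse, any line $\ell$ of $\Sigma$ with $|\ell\cap\cQ(r,q)|\in\{0,2,q+1\}$ gives, by Proposition \ref{prop1}, a point $P\in\bar\ell\cap(\cH(r,q^2)\setminus\Sigma)$ with $\ell_P\cap\Sigma=\ell$ and with $P\in\bar\cL$ iff $\ell\in\cL$; applying the tightness hypothesis at $P$ then yields the dual condition on $\ell$. The main obstacle is the uniform evaluation $|\cQ(r,q)\cap\ell^\perp|=\theta_{r-3,q}$ across the three non-tangent line types---this is precisely what singles out \emph{parabolic type $I$} (requiring the count over all $\ell$ with $|\ell\cap\cQ(r,q)|\in\{0,2,q+1\}$) as the notion of dual cover that corresponds to tightness inside $\cH(r,q^2)$.
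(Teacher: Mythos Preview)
Your proof is correct and follows essentially the same approach as the paper's: both reduce $|P^{\perp_h}\cap\bar\cL|$ to the count $x(q^2-q)+\theta_{r-3,q}+|\cL|-x-y$ and then invoke Lemma~\ref{lemma4}$(iii)$ to solve for $x$. You are simply more explicit than the paper about two points it leaves implicit---the uniform identity $|\cQ(r,q)\cap\ell^\perp|=\theta_{r-3,q}$ across all three non-tangent line types, and the fact that every such line $\ell$ carries a witnessing point $P\in\bar\ell\cap(\cH(r,q^2)\setminus\Sigma)$---which nicely clarifies why parabolic type~$I$ is the correct notion here.
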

\begin{proof}
Assume that ${\bar\cL}$ is an $(m(q^2-q)+q+1)$--tight set of ${\cal H}(r,q^2)$. By construction, ${\bar\cL}$ is contained in $\cQ(r,q) \cup {\cal O}$. Hence, if $P\in {\bar \cL} \setminus {\cal Q}(r,q)$ then $P^{\perp_h} \cap {\cQ(r,q)} = \ell_P^{\perp}$, where $\ell_P$ is the unique extended line of $\cL$ through $P$. If $P \in \cO$, then
$$
\vert P^{\perp_h}\cap{\bar\cL}\vert=(m(q^2-q)+q+1) \theta_{\frac{r-4}{2},q^2} + q^{r-2} = x (q^2-q) + \theta_{r-3,q} + \vert \cL \vert - x - y, 
$$
where $x$ is the the number of extended lines of $\cL$ contained in $P^{\perp_h}$ (hence contained in $\ell_P^\perp$) and $y$ is the number of extended lines of $\cL$ meeting $P^{\perp_h}$ in a point of $\cQ(r,q)$. Then, taking into account Lemma \ref{lemma4}, $iii)$, we have 
$$
xq^2=m \frac{q^{r-2}-q^2}{q^2-1}+q^{r-2},
$$
and hence $x= \frac{m \theta_{r-5,q}}{q+1}+q^{r-4}$.
If $P\in{\cH}(r,q^2)\setminus{\bar\cL}$, then 
$$
\vert P^{\perp_h}\cap{\bar\cL}\vert = (m(q^2-q)+q+1)\theta_{\frac{r-4}{2},q^2} = x(q^2-q) + \theta_{{r-3},q} + \vert \cL \vert - x - y,
$$ 
where again $x$ is the the number of extended lines of $\cL$ contained in $P^{\perp_h}$ (hence contained in $\ell_P^{\perp}$) and $y$ is the number of extended lines of $\cL$ meeting $P^{\perp_h}$ in a point of $\cQ(r,q)$. In this case $x = \frac{m \theta_{r-5,q}}{q+1}$.

Viceversa, assume that $\cL$ is an $(r-2)$--dual cover of parabolic type I. If $P \in {\cal Q}(r,q)$, then 
$$
\vert P^{\perp}\cap{\bar\cL}\vert=\vert P^{\perp_h}\cap{\bar\cL}\vert=m (q^2-q) \frac{\theta_{r-3,q}}{q+1} + \theta_{r-2,q} = (m(q^2-q)+q+1) \theta_{\frac{r-4}{2},q^2} + q^{r-2}. 
$$
If $P\in{\cH}(r,q^2)\setminus{\cal Q}(r,q)$ then $P^{\perp}={\tau(P)}^{\perp_h}$, where $\langle P,\tau(P)\rangle$ is the unique extended line of $\Sigma$ through $P$. Hence $\tau(P)\in\bar\cL$ if and only if $P\in\bar\cL$. Since
$$
P^{\perp_h}\cap{\bar\cL} = x(q^2-q) + \theta_{r-3,q} + \vert\cL\vert - x - y,
$$ 
where $x$ is the the number of lines of $\cL$ contained in $P^{\perp_h}$ (hence contained in $\ell_P^\perp$) and $y$ is the number of lines of $\cL$ meeting $P^{\perp_h}$ in a point of $\cQ(r,q)$, it follows that $\vert P^{\perp_h}\cap{\bar\cL}\vert$ is either 
$$
(m(q^2-q)+q+1) \theta_{\frac{r-4}{2},q^2} + q^{r-2}
$$ 
or 
$$
(m(q^2-q)+q+1) \theta_{\frac{r-4}{2},q^2},
$$
according as $P$ lies or does not lie on $\bar\cL$. Therefore ${\bar \cL}$ is a $(m(q^2-q)+q+1)$--tight set of ${\cal H}(r,q^2)$.	
\end{proof}

The proof of the following result is similar to that given for Theorem \ref{main4} and hence we omit it.

	\begin{theorem}\label{main5}
Let $\cL$ be an $m$--cover of ${\cal Q}(r,q)$ and let ${\bar \cL}$ be the set of points of ${\cal Q}(r,q^2)$ lying on the extended lines of $\cL$. Then ${\bar \cL}$ is an $(m(q^2-q)+q+1)$--tight set of ${\cal Q}(r,q^2)$ if and only if $\cL$ is an $(r-2)$--dual cover of parabolic type $II$.
	\end{theorem}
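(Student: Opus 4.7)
The plan is to mirror the proof of Theorem \ref{main4}, replacing the role of $\cH(r,q^2)$ by $\cQ(r,q^2)$ and exploiting that $\cQ(r,q^2)$ decomposes into only the three $G$-orbits $\cQ(r,q)$, $\cO$, $\cO'$. Since the extensions of secant lines of $\Sigma$ contribute no new points to $\cQ(r,q^2)\setminus\cQ(r,q)$, the tight set condition on $\bar\cL$ only constrains lines $\ell$ with $|\ell\cap\cQ(r,q)|\in\{0,q+1\}$, and this is precisely what distinguishes parabolic type II from parabolic type I. The key preparatory fact that I need is that for every $P\in\cO\cup\cO'$ the $(r-2)$-space $\ell_P^\perp=P^\perp\cap\Sigma$ meets $\cQ(r,q)$ in exactly $\theta_{r-3,q}$ points: if $\ell_P$ is totally singular ($P\in\cO$) this intersection is a cone with vertex $\ell_P$ over a parabolic $\cQ(r-4,q)$, while if $\ell_P$ is elliptic ($P\in\cO'$) the orthogonal decomposition $V=\ell_P\perp\ell_P^\perp$ forces the form on $\ell_P^\perp$ to be non-degenerate of odd vector dimension, hence $\ell_P^\perp\cap\cQ(r,q)$ is a parabolic $\cQ(r-2,q)$. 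A direct size computation gives $\theta_{r-3,q}$ in both cases.

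Granted this, I will compute $|P^\perp\cap\bar\cL|$ on each orbit using $\bar\cL=\cQ(r,q)\sqcup(\bar\cL\cap\cO)$. For $P\in\cQ(r,q)$ the identity $P^\perp=P^{\perp_h}$ makes the computation a tangent-hyperplane count identical to the one in Theorem \ref{main4}, producing the required value $(m(q^2-q)+q+1)\theta_{(r-4)/2,q^2}+q^{r-2}$. For $P\in\cO\cup\cO'$, letting $x$ denote the number of lines of $\cL$ contained in $\ell_P^\perp$ and $y$ those meeting $\ell_P^\perp$ in a real point, each of the remaining $|\cL|-x-y$ lines of $\cL$ extends to hit $P^\perp$ in exactly one non-real point of $\cO$, so that
\[
|P^\perp\cap\bar\cL|=\theta_{r-3,q}+x(q^2-q)+|\cL|-x-y.
\]
Substituting $|\cL|=m\theta_{r-1,q}/(q+1)$ and $y=m\theta_{r-3,q}-x(q+1)$ from Lemma \ref{lemma4} and equating with the tight set value, augmented by $q^{r-2}$ precisely when $P\in\bar\cL$, simplifies exactly as in Theorem \ref{main4} to $x=m\theta_{r-5,q}/(q+1)+q^{r-4}$ in the first case and $x=m\theta_{r-5,q}/(q+1)$ in the second.

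To finish, I will identify which lines of $\Sigma$ arise as some $\ell_P$: as $P$ ranges over $\bar\cL\cap(\cO\cup\cO')$ the line $\ell_P$ ranges exactly over $\cL$, whereas as $P$ ranges over $(\cO\cup\cO')\setminus\bar\cL$ the line $\ell_P$ ranges exactly over the lines $\ell\notin\cL$ with $|\ell\cap\cQ(r,q)|\in\{0,q+1\}$ (totally singular lines not in $\cL$, or elliptic lines). This translates the two values of $x$ above into precisely the two clauses in the definition of $(r-2)$-dual of parabolic type II, and running the same chain of equalities in reverse yields the converse implication. The main (and only) delicate point is the uniform evaluation $|\ell_P^\perp\cap\cQ(r,q)|=\theta_{r-3,q}$ across the totally singular and elliptic cases; once this is in hand, the rest is formally the same algebra as the proof of Theorem \ref{main4}, which is why the paper elects to omit it.
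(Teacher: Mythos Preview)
Your proposal is correct and follows exactly the route the paper intends: a direct mirror of the proof of Theorem~\ref{main4} with $\cQ(r,q^2)$ in place of $\cH(r,q^2)$. You have also correctly isolated the one genuinely new ingredient, namely that for $P\in\cQ(r,q^2)\setminus\cQ(r,q)$ the line $\ell_P$ is never secant (extended secant lines meet $\cQ(r,q^2)$ only in their two real points), so that only the cases $|\ell_P\cap\cQ(r,q)|\in\{0,q+1\}$ occur---which is precisely why the weaker condition of parabolic type~II, rather than type~I, characterizes tightness in $\cQ(r,q^2)$.
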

	
It is straightforward to remark that an $(r-2)$--dual cover of parabolic type $I$ is an $(r-2)$--dual cover of parabolic type $II$ and if $\cL$ consists of all the lines of $\cQ(r,q)$, then $\cL$ is an $(r-2)$--dual $\theta_{r-3,q}$--cover of parabolic type $I$. A more interesting example is the following. Let $\PGO(7,q)$ be the projective orthogonal group of $\cQ(6,q)$. The Cartan Dickson Chevalley exceptional group $G_2(q)$ is a subgroup of $\PGO(7,q)$. It occurs as the stabilizer in $\PGO(7,q)$ of a configuration of points, lines and planes of $\cQ(6,q)$. The group $G_2(q)$ is also contained in the automorphism group of a classical generalized hexagon, called the {\em split Cayley hexagon} and denoted by $H(q)$, see \cite{De}. The points of $H(q)$ are all the points of $\cQ(6,q)$ and the lines of $H(q)$ are certain lines of $\cQ(6,q)$. The number of points of $\cQ(6,q)$ is $(q^6-1)/(q-1)$, and this is also the number of lines and planes of $\cQ(6,q)$ involved in $H(q)$. The generalized hexagon $H(q)$ has the following properties \cite[p. 33]{De}: 
\begin{itemize}
\item through any point of $H(q)$ there pass $q+1$ lines of $H(q)$, and any line of $H(q)$ contains $q+1$ points of $H(q)$; 
\item the $q+1$ lines of $H(q)$ through a point of $H(q)$ are contained in a unique plane of $H(q)$;
\item a plane of $\cQ(6,q)$ either contains $q+1$ lines of $H(q)$ and in this case it is a plane of $H(q)$ or it contains no line of $H(q)$;
\item through a line of $H(q)$, there pass $q+1$ planes of $H(q)$, whereas through a line of $\cQ(6,q)$ that is not a line of $H(q)$, there is exactly one plane of $H(q)$;
\item an elliptic quadric $\cQ^-(5,q)$ embedded in $\cQ(6,q)$ contains exactly $q^3+1$ pairwise disjoint lines of $H(q)$. 
\end{itemize}
 
\begin{prop}\label{splitodd}
The set of lines of $H(q)$ is a $3$--dual cover of parabolic type $I$.	
\end{prop}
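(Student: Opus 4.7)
The plan is to verify the definition of a $3$-dual cover of parabolic type $I$ directly. Since every point of $\cQ(6,q)$ lies on exactly $q+1$ lines of $H(q)$, $\cL$ is a $(q+1)$-cover of $\cQ(6,q)$, so $m=q+1$ and I must show, for every line $\ell$ of $\PG(6,q)$ with $|\ell\cap\cQ(6,q)|\in\{0,2,q+1\}$, that $x:=|\{m\in\cL:m\subset\ell^\perp\}|$ equals $q^2+q+1$ when $\ell\in\cL$ and $q+1$ otherwise.

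The key structural fact I would establish first is a bijection between points of $\cQ(6,q)$ and planes of $H(q)$: the listed incidence properties force the $q+1$ lines of $H(q)$ in each plane of $H(q)$ to form a pencil through a unique apex, and the plane $\pi_P$ of $H(q)$ containing the $q+1$ lines of $H(q)$ through $P$ has $P$ as its apex. A short count then shows that through every $Q\in\cQ(6,q)$ pass $q^2+q+1$ planes of $H(q)$, namely $\pi_Q$ together with the $q(q+1)$ planes $\pi_{Q'}$ where $Q'$ runs over the non-$Q$ points of the $q+1$ lines of $H(q)$ through $Q$. The main engine of the argument is the double count
$$\sum_{P\in\cQ(6,q)}|\pi_P\cap\ell^\perp|=|\ell^\perp\cap\cQ(6,q)|\cdot(q^2+q+1)=\theta_{3,q}(q^2+q+1),$$
where $|\ell^\perp\cap\cQ(6,q)|=\theta_{3,q}$ in every case (a cone over a conic when $\ell$ is totally singular and a $\cQ(4,q)$ when $\ell$ is secant or external, both of size $\theta_{3,q}$). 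Partitioning the $P$'s by whether $\pi_P\cap\ell^\perp$ is a point, a line or the whole plane $\pi_P$ (counts $A,B,C$, with $A+B+C=\theta_{5,q}$), this identity simplifies to $B=(q+1)(q^2+q+1-C)$.

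Next I would determine $C$ case by case. If $\ell\in\cL$, all $q+1$ planes of $\cQ(6,q)$ through $\ell$ are planes of $H(q)$ (namely $\pi_P$ for $P\in\ell$), so $C=q+1$. If $\ell$ is totally singular but not in $\cL$, exactly one plane of $H(q)$ contains $\ell$ and any other plane of $H(q)$ inside $\ell^\perp$ would also have to be a ruling plane through $\ell$, so $C=1$. The decisive step, and the main geometric obstacle, is showing $C=0$ when $\ell$ is secant or external: if $\pi_P\subset\ell^\perp$ then $\ell\subset\pi_P^\perp$, and since $\pi_P$ is a generator and $\pi_P^\perp/\pi_P$ is an anisotropic line, every line of the projective $3$-space $\pi_P^\perp$ either lies in $\pi_P$ (totally singular) or meets $\pi_P$ in exactly one point (tangent to $\cQ(6,q)$); this contradicts $|\ell\cap\cQ(6,q)|\in\{0,2\}$.

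Finally I would perform a second double count on pairs $(m,P)$ with $m$ a line of $\cQ(6,q)$ in $\ell^\perp$ and $m\subset\pi_P$. A line of $H(q)$ lies in $q+1$ planes of $H(q)$ while a line of $\cQ(6,q)$ not in $H(q)$ lies in exactly one, so this count equals $(q+1)x+(T-x)=qx+T$, where $T$ denotes the total number of lines of $\cQ(6,q)$ in $\ell^\perp$, equal to $q^3+2q^2+q+1$ in the cone case and $q^3+q^2+q+1$ in the $\cQ(4,q)$ case. Counted by $P$ the same quantity equals $C(q^2+q+1)+B$. Substituting the case-by-case values of $B$, $C$ and $T$ yields $qx=q(q^2+q+1)$ when $\ell\in\cL$ and $qx=q(q+1)$ in the three other cases, hence $x=q^2+q+1$ and $x=q+1$ respectively, matching the definition of a $3$-dual cover of parabolic type $I$.
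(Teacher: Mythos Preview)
Your argument is correct. The point--plane bijection $P\mapsto\pi_P$ is justified by the listed properties, the constant $q^2+q+1$ planes of $H(q)$ through each point follows as you say, both double counts are sound, and your case analysis of $C$ and $T$ checks out (in particular the observation that $\pi_P^\perp$ contains only totally singular and tangent lines forces $C=0$ in the secant/external case).

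Your route, however, differs substantially from the paper's. For totally singular $\ell$ the paper argues directly: the lines of $\cQ(6,q)$ in $\ell^\perp$ are exactly those in the $q+1$ quadric planes through $\ell$, and one reads off $x$ from how many of these planes are hexagon planes ($q+1$ if $\ell\in\cL$, one otherwise). For secant or external $\ell$ the paper does something quite different: it embeds $\ell^\perp\cap\cQ(6,q)=\cQ(4,q)$ in an elliptic quadric $\cQ^-(5,q)\subset\cQ(6,q)$, invokes the cited fact that the hexagon lines inside $\cQ^-(5,q)$ form a spread, dualises to an ovoid, views the lines of $\cQ(4,q)$ as a $(q+1)$--tight set of the dual, and applies the standard ovoid/tight--set intersection count to get $x=q+1$. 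Your approach replaces this external machinery by the two elementary double counts over the correspondence $P\leftrightarrow\pi_P$, using nothing beyond the incidence properties of $H(q)$ already listed. The paper's proof is shorter once the $\cQ^-(5,q)$ spread property and the tight--set formalism are taken for granted; your proof is self--contained, uniform across all four cases, and avoids the detour through $\cQ^-(5,q)$ entirely.
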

\begin{proof}
Let $\ell$ be a line of $\cQ(6,q)$. Then $\ell^\perp$ contains the $q+1$ planes of $\cQ(6,q)$ through $\ell$. If $\ell$ is a line of $H(q)$, then every plane through $\ell$ contains $q$ lines of $H(q)$ distinct from $\ell$. If $\ell$ is a line of $\cQ(6,q)$ not of $H(q)$, then through $\ell$ there is a unique plane of $H(q)$. Hence in the former case $\ell^\perp$ contains $q^2+q+1$ lines of $H(q)$, while in the latter case $\ell^\perp$ contains $q+1$ lines of $H(q)$. Let $\ell$ be a line of $\PG(6,q)$ such that $|\ell \cap \cQ(6,q)| \in \{0,2\}$. Then $\ell^\perp$ meets $\cQ(6,q)$ in a parabolic quadric $\cQ(4,q)$. Let $\cQ^-(5,q)$ be an elliptic quadric contained in $\cQ(6,q)$ and containing $\cQ(4,q)$. Since the $q^3+1$ lines of $H(q)$ contained in $\cQ^-(5,q)$ are pairwise disjoint, they give rise to a line--spread of $\cQ^-(5,q)$ and hence to an ovoid of the dual of $\cQ^-(5,q)$. On the other hand, the set of lines of $\cQ(4,q)$ corresponds to a $(q+1)$--tight set of the dual of $\cQ^-(5,q)$. It follows that there are exactly $q+1$ lines of $H(q)$ contained in $\ell^\perp$.       
\end{proof}

	\begin{remark}
The fact that $\cQ(r,q)$ is a $(q+1)$--tight set of $\cH(r,q^2)$ has been already proved in \cite[Theorem 4.1]{DM}.  In the same paper, the authors conjecture that every $(q+ 1)$--tight  set  of $\cH(4,q^2)$ that  is  not  the  union  of $q+ 1$ lines is  the  set  of  points  of  an  embedded $\cW(3,q)$,  or,  when $q$ is  odd,  the  set  of  points  of  an embedded $\cW(3,q)$ or $\cQ(4,q)$.  With the aid of MAGMA \cite{BCP} we were able to construct counterexamples to this conjecture when $q=2,3$.
Assume that $\cH(4,q^2)$ has equation $X_1^{q+1}+X_2^{q+1}+X_3^{q+1}+X_4^{q+1}+X_5^{q+1}=0$.
The point set $S:=\{U_1,\dots,U_5\}$, where $U_i$ is the point of $\PG(4,q^2)$ having $1$ at the $i$--th position and $0$ elsewhere forms a self--polar simplex with respect to $\cH(4,q^2)$. Then, the $10$ lines joining two points of $S$ are secant lines of $\cH(4,q^2)$. Let $G={(C_{q+1})}^4 \rtimes Sym_5$ be the stabilizer of $S$ in $\PGU(5,q^2)$.

\medskip
\fbox{$q=2$}
\medskip

In this case $\vert G\vert = 4860$ and $G$ has a unique subgroup, say $H$ of index $6$. The group $H$ acts transitively on $S$ and has two orbits ${\cL}_i$, $i=1,2$, of size $5$ on the $10$ secant lines joining two points of $S$: 
$$
\cL_1:=\{U_1U_4,U_1U_5,U_3U_5,U_2U_4,U_2U_3\}\\
\quad \cL_2:=\{U_4U_5,U_3U_4,U_1U_3,U_2U_5,U_1U_2\}.
$$
Each line of $\cL_i$ meets $\cH(4,4)$ in $3$ points giving rise to a subset of $15$ points that is a transitive $3$--tight set that is neither union of generators nor a subquadrangle of order $2$. 

\medskip
\fbox{$q=3$}
\medskip

In this case each of the $10$ lines joining two points of $S$ meets $\cH(4,9)$ in $4$ points giving rise to a subset of $40$ points forming a transitive $4$--tight set that is neither union of generators nor a subquadrangle of order $3$.

Similarly, using the stabilizer of a self--polar simplex in $\PGU(7,4)$ we found a $3$--tight set of $\cH(6,4)$ consisting of the $63$ points on the $21$ secant lines joining two points of the simplex.

	\end{remark}


\end{document}